\documentclass[preprint,12pt,nonatbib]{elsarticle}
\usepackage{rotating}
\usepackage{amsthm,amsmath,amssymb}
\usepackage{mathdots}
\usepackage{graphicx}
\usepackage{mathrsfs}
\usepackage{amsfonts}
\usepackage{enumerate}
\usepackage{latexsym}
\usepackage{graphpap}
\usepackage{cite}
\usepackage{tikz}
\usepackage{pgfplots}
\usepackage{subcaption}
\usepackage{bm}
\usepackage{multirow}
\usepackage{makecell}
\usepackage{blkarray}

\usepackage{float}
\usepackage{array}
\usepackage{booktabs}
\usepackage{caption}
\usepackage{threeparttable}
\usepackage{tabularx}
\usepackage{longtable}
\usepackage{multirow}
\usepackage{diagbox}
\usepackage{rotating}

\theoremstyle{plain}
\newtheorem{theorem}{Theorem}[section]
\newtheorem{corollary}[theorem]{Corollary}
\newtheorem{lemma}[theorem]{Lemma}
\newtheorem{proposition}[theorem]{Proposition}

\numberwithin{equation}{section}

\newcommand{\dom}{\rm{dom}\,}
\newcommand{\im}{\rm{im}\,}

\title{Fixed points of orientation-preserving full transformation}

\author{Yang An,
Wen Ting Zhang\footnote{Corresponding author.}, Yi He\\
\small School of Mathematics and Statistics, Lanzhou University, PR China\\
\small{\tt Email: any2024@lzu.edu.cn; zhangwt@lzu.edu.cn;  heyi2023@lzu.edu.cn}}
\date{}
\journal{arXiv}

\begin{document}
\begin{abstract}
Let $\mathcal{OP}_n$ be the monoid of all orientation-preserving full transformations on $X_n=\{1,\dots, n\}$ with the natural order.
For $\alpha \in \mathcal{OP}_n$, let $F(\alpha)=\{y\in X_n: y\alpha=y\}$ and $F(n,m)=|\{\alpha:|F(\alpha)|=m\}|$. Umar posed the question about the number $F(n,m)$ of elements of $\mathcal{OP}_n$ with $m$ fixed points. In this paper, we show that the number $F(n,m)$ of $\mathcal{OP}_n$ is $\binom{2n}{n-m}$ for $2\leqslant m\leqslant n$ and get the expectation and probability distribution of the cardinality of fixed-point set $F(\alpha)$ for $\alpha\in\mathcal{OP}_n$.
\end{abstract}

\begin{keyword} orientation-preserving\sep full transformation \sep fixed points \sep expectation
\vskip0.2cm
\MSC [2020]  20M20\sep 05A15\sep 60C05
\end{keyword}
\maketitle

\section{Introduction}
Let $X_n=\{1,\ldots,n\}$ be equipped with the natural order. Denote by $\mathcal{PT}_n$ the monoid of all partial transformations on $X_n$. In this paper, we denote composition simply by juxtaposition and shall adopt a right mapping convention; thus if $\alpha,\beta\in\mathcal{PT}_n$, the symbol $\alpha\beta$ denotes the mapping in $\mathcal{PT}_n$ obtained by performing first $\alpha$ and then $\beta$. A transformation $\alpha\in\mathcal{PT}_n$ is called \textit{order-preserving} if, for all $x, y\in\dom\alpha$, $x\leqslant y$ implies $x\alpha\leqslant y\alpha$. Let $\mathcal{PO}_n$ be the submonoid of $\mathcal{PT}_n$ of all order-preserving partial transformations. Let $\alpha\in \mathcal{PT}_n$ and $\dom\alpha$$=\{s_1,\ldots,s_m\}$ with $m\geqslant 0$ and $s_1<s_2<\cdots<s_m$. Then $\alpha$ is called \textit{orientation-preserving} if the sequence of its images $(s_1\alpha,\ldots,s_m\alpha)$ is cyclic, that is, there exists no more than one index $i\in\{1,\ldots,m\}$ such that $s_i\alpha>s_{i+1}\alpha$, where $s_{m+1}$ denotes $s_1$. Let $\mathcal{POP}_n$ be the submonoid of $\mathcal{PT}_n$ of all orientation-preserving partial transformations. Clearly $\mathcal{PO}_n\subseteq\mathcal{POP}_n$. Denote by $\mathcal{T}_n$ and $\mathcal{I}_n$ the submonoid of $\mathcal{PT}_n$ of all full and injective partial transformations, respectively. Let $\mathcal{O}_n=\mathcal{PO}_n\cap\mathcal{T}_n$ and $\mathcal{OP}_n=\mathcal{POP}_n\cap\mathcal{T}_n$ be the submonoids of $\mathcal{T}_n$ of all full order-preserving and orientation-preserving transformations, respectively, and $\mathcal{POI}_n=\mathcal{PO}_n\cap\mathcal{I}_n$ and $\mathcal{POPI}_n=\mathcal{POP}_n\cap\mathcal{I}_n$ be the submonoids of $\mathcal{I}_n$ of all injective order-preserving and orientation-preserving partial transformations, respectively.

Combinatorial and probabilistic properties of transformation semigroups have been studied extensively, yielding many interesting results. For each $\alpha\in\mathcal{PT}_n$, let $Y_n(\alpha)=|{\im}\alpha|$ given that $|{\dom}\alpha|=n$ and $F(\alpha)=\{y\in X_n:y\alpha=y\}$ be the fixed-point set of $\alpha$. For each $x\in X_n$, define $S(n,x)=|\{\alpha\in\mathcal{PT}_n:x\alpha=x\}|$ and $F(n,m)=|\{\alpha\in\mathcal{PT}_n:|F(\alpha)|=m\}|$. Higgins explored the expectation and variance of $Y_n$ for the monoid $\mathcal{O}_n$ and proved that $E(Y_n)=\frac{n^2}{2n-1}$, $\sigma^2(Y_n)=\frac{(n-1)n^2}{2(2n-1)^2}$\cite{higgins1993}. Further, he proved that $E(|F(\alpha)|)=\frac{4^{n-1}}{\binom{2n-1}{n-1}}$, $S(n,x)=\binom{2x-2}{x-1}\binom{2n-2x}{n-x}$ and $F(n,m)=\frac{m}{n}\binom{2n}{n-m}$ in $\mathcal{O}_n$\cite{higgins1993}. Howie showed the combinatorial properties of $\mathcal{O}_n$ and determined that $|\mathcal{O}_n|=\binom{2n-1}{n-1}$ \cite{howie1971}. Subsequently, Gomes and Howie proved that $|\mathcal{PO}_n|=\sum_{r=0}^n\binom{n}{r}\binom{n+r-1}{r}$, the rank of $\mathcal{O}_n$ is $n$ and the rank of $\mathcal{PO}_n$ is $2n-1$ for $n\geqslant2$ \cite{howie1992}. Moreover, Catarino and Higgins studied combinatorial properties of $\mathcal{OP}_n$ and proved that $|\mathcal{OP}_n|=n\binom{2n-1}{n-1}-n(n-1)$ \cite{higgins1999}. Laradji and Umar computed the cardinalities of some equivalence classes in $\mathcal{O}_n$ \cite{umar2006}. For the injective case, Garba studied the combinatorial properties of $\mathcal{POI}_n$ and showed that $|\mathcal{POI}_n|=\binom{2n}{n}$ \cite{garba1994}. Laradji and Umar explored the number $F(n,m)$ of elements of $\mathcal{POI}_n$ with $m$ fixed points and showed that the generating function of $F(n,m)$ is $\frac{x^m}{(x+\sqrt{1-4x})^{m+1}}$ \cite{umar2015}. In {\rm\cite[Tables 3.2 and 6.2]{umar2014}}, Umar posed open problems related to the number $F(n,m)$ in $\mathcal{PO}_n$ and $\mathcal{OP}_n$. Recently, Laradji studied the number $F(n,m)$ of elements of $\mathcal{PO}_n$ with $m$ fixed points and proved that the generating function of $F(n,m)$ is $\frac{16x(1+x-\sqrt{1-6x+x^2})^{m-1}}{(1+x+3\sqrt{1-6x+x^2})^{m+1}}$ \cite{laradji2022}. However, the number $F(n,m)$ of elements of $\mathcal{OP}_n$ with $m$ fixed points has been unknown.

In this paper, we study the combinatorial and probabilistic properties of fixed points in $\mathcal{OP}_n$. In Section \ref{Section2}, we study the number $S(n,x)$ of elements of $\mathcal{OP}_n$ with fixed point $x$ and prove that $S(n,x)=\binom{2n-1}{n-1}-(n-1)$. Further, we compute that the expectation of $|F(\alpha)|$ for $\alpha\in\mathcal{OP}_n$ is $1$. In Section \ref{Section3}, we show that the number $F(n,m)$ of elements of $\mathcal{OP}_n$ with $m$ fixed points is $\binom{2n}{n-m}$ for $2\leqslant m\leqslant n$. Moreover, we get the probability distribution of $|F(\alpha)|$ for $\alpha\in\mathcal{OP}_n$ and verify that $E(|F(\alpha)|)=1$.

\section{Preliminaries}
Refer to the monograph by Howie \cite{howie1995} for any undefined notation and terminology of semigroup theory. We list some known combinatorial results that will be needed later.

\begin{proposition} For natural numbers $m, n, i$ and $j$, the following identities hold
\begin{align}
\sum_{j=0}^n\binom{m+j}{m}
&=\binom{m+n+1}{n},\label{1}\\
\sum_{i=1}^n\binom{2n-i-1}{n-i}
&=\binom{2n-1}{n-1},\label{2}\\
\sum_{m=2}^n\binom{2n}{n-m}
&=2^{2n-1}-\frac{1}{2}\binom{2n}{n}-\binom{2n}{n-1},\label{3}\\
\sum_{m=2}^n m \binom{2n}{n-m}
&=\frac{n-1}{2}\binom{2n}{n-1}.\label{4}
\end{align}
\end{proposition}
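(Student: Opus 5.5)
We prove the four identities separately, each reduced to a standard binomial fact (Pascal's rule, hockey-stick summation, symmetry of a row, telescoping).

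\emph{Identity (\ref{1}).} Induct on $n$. For $n=0$ both sides equal $1$. Assuming the identity for $n-1$, Pascal's rule gives
\[
\binom{m+n}{n-1}+\binom{m+n}{m}=\binom{m+n}{n-1}+\binom{m+n}{n}=\binom{m+n+1}{n}.
\]

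\emph{Identity (\ref{2}).} Substitute $j=n-i$, so that $j$ runs over $0,\dots,n-1$ and
\[
\binom{2n-i-1}{n-i}=\binom{(n-1)+j}{j}=\binom{(n-1)+j}{n-1}.
\]
Applying (\ref{1}) with $m=n-1$ and upper limit $n-1$ gives $\binom{2n-1}{n-1}$.

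\emph{Identity (\ref{3}).} Let $k=n-m$, so the sum is $\sum_{k=0}^{n-2}\binom{2n}{k}$. Since $\sum_{k=0}^{2n}\binom{2n}{k}=2^{2n}$, symmetry of the row gives
\[
\sum_{k=0}^{n-1}\binom{2n}{k}=\tfrac12\Bigl(2^{2n}-\binom{2n}{n}\Bigr).
\]
Subtracting the $k=n-1$ term yields $2^{2n-1}-\tfrac12\binom{2n}{n}-\binom{2n}{n-1}$.

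\emph{Identity (\ref{4}).} This is the step needing an idea; the device is telescoping. The key observation is
\[
m\binom{2n}{n-m}=\frac{n}{2}\left[\binom{2n}{n-m}-\binom{2n}{n-m-1}\right]+\frac{n}{2}\left[\binom{2n}{n-m}-\binom{2n}{n-m+1}\right].
\]
To check it, use the ratios
\[
\binom{2n}{n-m-1}=\frac{n-m}{n+m+1}\binom{2n}{n-m},\qquad \binom{2n}{n-m+1}=\frac{n+m}{n-m+1}\binom{2n}{n-m},
\]
and simplify. The simplification does not give a clean identity, so a cleaner route is needed.

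Instead, write
\[
m\binom{2n}{n-m}=n\binom{2n-1}{n-m}-n\binom{2n-1}{n-m-1}.
\]
This follows from $\binom{2n}{n-m}=\binom{2n-1}{n-m}+\binom{2n-1}{n-m-1}$ together with
\[
(n-m)\binom{2n}{n-m}=2n\binom{2n-1}{n-m-1},
\]
since these give $n\binom{2n}{n-m}-(n-m)\binom{2n}{n-m}=m\binom{2n}{n-m}$, and splitting $n\binom{2n}{n-m}$ by Pascal produces exactly the two terms.

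Summing over $m=2,\dots,n$ then telescopes, with terms $b_{n-m}-b_{n-m-1}$ where $b_k=\binom{2n-1}{k}$. The result is $n\,b_{n-2}-n\,b_{-1}=n\binom{2n-1}{n-2}$.

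Finally,
\[
n\binom{2n-1}{n-2}=n\cdot\frac{n-1}{2n}\binom{2n}{n-1}=\frac{n-1}{2}\binom{2n}{n-1}.
\]
The middle equality holds because $\binom{2n}{n-1}=\frac{2n}{n-1}\binom{2n-1}{n-2}$. This gives (\ref{4}).
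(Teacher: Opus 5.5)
Your proof is correct. For (\ref{1})--(\ref{3}) it is the paper's argument: the paper cites the hockey-stick identity where you give the Pascal induction, and (\ref{2}) and (\ref{3}) use the same substitution and the same half-row symmetry.

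For (\ref{4}) you take a genuinely different route. Both arguments rest on Pascal's rule and absorption, but the paper applies them to the whole sum. It writes $\sum_{j=0}^{n-2}(n-j)\binom{2n}{j}$ and converts $\sum j\binom{2n}{j}$ into $2n\sum_{j=0}^{n-3}\binom{2n-1}{j}$. It then evaluates that as $2^{2n-2}-\binom{2n}{n-1}$ by a half-row symmetry it does not spell out, invokes (\ref{3}), and lets the $2^{2n-1}$ and $\binom{2n}{n}$ terms cancel. You apply the same two facts termwise, getting $m\binom{2n}{n-m}=n\binom{2n-1}{n-m}-n\binom{2n-1}{n-m-1}$, which telescopes at once.

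Your version is shorter, independent of (\ref{3}), and more informative. It yields $\sum_{m=r}^{n}m\binom{2n}{n-m}=n\binom{2n-1}{n-r}$ for every lower limit $r$, which explains why the power of $2$ in (\ref{3}) disappears in (\ref{4}). The paper's evaluation of the two partial row sums works only because their ranges lie within a couple of terms of a half row. The paper's route has the minor merit of recycling (\ref{3}), which it needs anyway.

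Two clean-ups are worth making:
\begin{enumerate}
\item Delete the abandoned first ``key observation''. It is not merely unwieldy but false: for $n=m=2$ its left side is $2$ and its right side is $-2$.
\item Write the last step as $(n-1)\binom{2n}{n-1}=2n\binom{2n-1}{n-2}$ instead of dividing by $n-1$. This covers the case $n=1$, where both sides of (\ref{4}) vanish.
\end{enumerate}
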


\begin{proof}
The identity (\ref{1}) is Hockey Stick Identity \cite{riordan1968}. The identities (\ref{2}), (\ref{3}) and (\ref{4}) hold by
\[\sum_{i=1}^n\binom{2n-i-1}{n-i}=\sum_{j=0}^{n-1}\binom{n+j-1}{j}=\sum_{j=0}^{n-1}\binom{n-1+j}{n-1}\stackrel{(\ref{1})}{=}\binom{2n-1}{n-1},\]
\[\sum_{m=2}^n\binom{2n}{n-m}=\sum_{j=0}^{n-2}\binom{2n}{j}=\sum_{j=0}^{n-1}\binom{2n}{j}-\binom{2n}{n-1}=2^{2n-1}-\frac{1}{2}\binom{2n}{n}-\binom{2n}{n-1},\]
\begin{align*}
\sum_{m=2}^n m \binom{2n}{n-m}
&\;=\;\;\sum_{j=0}^{n-2}(n-j)\binom{2n}{j}=n\sum_{j=0}^{n-2}\binom{2n}{j}-\sum_{j=0}^{n-2}j\binom{2n}{j}\\
&\;=\;\;n\sum_{j=0}^{n-2}\binom{2n}{j}-2n\sum_{j=0}^{n-3}\binom{2n-1}{j}\\
&\;=\;\;n(\sum_{j=0}^{n-2}\binom{2n}{j}-(2^{2n-1}-2\binom{2n}{n-1}))\\
&\stackrel{(\ref{3})}{=}n((2^{2n-1}-\frac{1}{2}\binom{2n}{n}-\binom{2n}{n-1})-(2^{2n-1}-2\binom{2n}{n-1}))\\
&\;=\;\;\frac{n-1}{2}\binom{2n}{n-1}.
\end{align*}
\end{proof}

We list some results for $\mathcal{O}_n$ and $\mathcal{OP}_n$ that will be needed later. Let the $n$-cycle $\delta=(1 2\cdots n)$. Since the sequence of images $(2,\ldots,n,1)$ of the ordered set $X_n$ under $\delta$ is cyclic, $\delta$ lies in $\mathcal{OP}_n$.

\begin{theorem}
{\rm\cite[Theorem 3.12]{higgins1993}}\label{On}
The number $F(n,m)$ of elements of $\mathcal{O}_n$ with $m$ fixed points is $\frac{m}{n}\binom{2n}{n-m}$.
\end{theorem}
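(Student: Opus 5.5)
We encode each $\alpha\in\mathcal{O}_n$ by its nondecreasing image sequence $a_1\leqslant a_2\leqslant\cdots\leqslant a_n$, where $a_k=k\alpha\in X_n$. We then read this as a north/east lattice path from $(0,0)$ to $(n,n)$ of length $2n$: for $k=1,\dots,n$, go north up to height $a_k$ and then take one east step. This is the usual bijection behind $|\mathcal{O}_n|=\binom{2n-1}{n-1}$. A fixed point $k$, i.e.\ $a_k=k$, is then a specific local configuration of the path at the diagonal: the $k$-th east step is taken at height exactly $k$.

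The key structural observation is that $d_k=a_k-k$ satisfies $d_{k+1}\geqslant d_k-1$, because $a_{k+1}\geqslant a_k$. So $d$ can never jump from a positive value to a negative one without passing through $0$. Hence the fixed points $f_1<\cdots<f_m$ cut $X_n$ into $m+1$ segments on each of which $d$ has constant sign:
\begin{itemize}
\item on the initial segment $[1,f_1)$ we have $d>0$;
\item on the final segment $(f_m,n]$ we have $d<0$;
\item on each internal segment $(f_i,f_{i+1})$, $d$ is either entirely positive or entirely negative, and the values stay in $[f_i,f_{i+1}]$.
\end{itemize}
Conversely, independent choices on the segments glue to an order-preserving map. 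This gives a product decomposition, so the ordinary generating function $\sum_n F(n,m)x^n$ factors as $x^m A(x)B(x)G(x)^{m-1}$. Here $A$, $B$ and $G$ count the initial, final and internal block types, and the factor $x^m$ accounts for the fixed points themselves.

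Next I would identify each factor in terms of the Catalan series $C(x)=\frac{1-\sqrt{1-4x}}{2x}$ via first-return decompositions of the corresponding path pieces. The target is
\[
\sum_n F(n,m)x^n=x^mC(x)^{2m}.
\]
This suffices by the standard Lagrange-inversion identity $[x^k]C(x)^r=\frac{r}{2k+r}\binom{2k+r}{k}$. With $r=2m$ and $k=n-m$ it gives $\frac{2m}{2n}\binom{2n}{n-m}=\frac{m}{n}\binom{2n}{n-m}$, as claimed.

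The main obstacle is getting the block generating functions exactly right, especially the internal blocks. A naive check shows a gap of length $2$ admits only $4$ maps (two of each sign), not $\mathrm{Cat}_3=5$. So $G$ is not simply $C^2$, and the $m+1$ blocks do not each contribute a clean power of $C$. To repair this, I would first compute $A$, $B$ and $G$ honestly by the first-return method. I would then verify the algebraic identity $A\,B\,G^{m-1}=C^{2m}$, or adjust the bookkeeping, for example by attaching the boundary steps adjacent to each fixed point to neighbouring blocks. The aim is a decomposition in which each fixed point contributes exactly $x\,C(x)^2$.

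If this algebra resists, I would fall back on a direct bijective route. This means reshaping the path so that the fixed points become exactly the $m$ returns of a path of length $2n$ at a prescribed depth. The count $\frac{m}{n}\binom{2n}{n-m}$ should then follow from the cycle lemma, or equivalently from the reflection principle for the ballot problem.
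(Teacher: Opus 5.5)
Your overall strategy is sound, but there is a genuine gap: the structural claim that on each internal segment $(f_i,f_{i+1})$ the sign of $d$ is constant is false. The inequality $d_{k+1}\geqslant d_k-1$ only prevents $d$ from passing from positive to negative without hitting $0$. It puts no restriction on upward jumps, so $d$ can go from negative to positive inside a gap.

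For instance, the map in $\mathcal{O}_4$ with image sequence $(1,1,4,4)$ has fixed-point set $\{1,4\}$, yet $d_2=-1$ and $d_3=1$. This is exactly the map your ``naive check'' missed. A gap of length $2$ admits $5$ maps, not $4$; the fifth is $f+1\mapsto f$, $f+2\mapsto f+3$.

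With constant-sign internal blocks you would get $G=2C-1$. Then $A B G^{m-1}=C^2(2C-1)^{m-1}\neq C^{2m}$; for $n=4$, $m=2$ it gives $13$ instead of $14$. So the step you flag as the main obstacle cannot be closed by verifying the identity or adjusting bookkeeping on top of the stated decomposition. The cycle-lemma fallback is only a sketch. As written, the argument stops exactly where the count has to be established.

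The repair is to state the structure correctly. On an internal segment, $d$ is negative on an initial run of some length $p$ and positive on the remaining run of length $q$, with $p+q=\ell=f_{i+1}-f_i-1$; once $d\geqslant 1$ it stays $\geqslant 1$ until it hits $0$.

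The negative run satisfies $f_i\leqslant a_k\leqslant k-1$, and the positive run satisfies $k+1\leqslant a_k\leqslant f_{i+1}$. After shifting, each is a ballot sequence counted by a Catalan number $c_p$, respectively $c_q$, where $c_j=[x^j]C(x)$. The junction condition is automatic, since $a_{f_i+p}\leqslant f_i+p-1<f_i+p+2\leqslant a_{f_i+p+1}$.

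Hence a gap of length $\ell$ contributes $\sum_{p+q=\ell}c_pc_q=c_{\ell+1}$, so $G(x)=(C(x)-1)/x=C(x)^2$. Your claims that the initial block is all positive and the final block all negative are correct, giving $A=B=C$. Then $\sum_n F(n,m)x^n=x^mC^2\,(C^2)^{m-1}=x^mC^{2m}$, and your Lagrange-inversion step finishes the proof.

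For comparison, the paper does not prove this statement at all; it quotes it from Higgins.
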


\begin{theorem}
{\rm\cite[Theorem 2.6]{higgins1999}}\label{factorization}
Each $\alpha \in \mathcal{OP}_n$ has a factorization $\alpha = \delta^{r}\beta$, where $0\leqslant r\leqslant n-1$ and $\beta\in O_n$, which is unique unless $\alpha$ is constant.
\end{theorem}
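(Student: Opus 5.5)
The plan is to read a factorization $\alpha=\delta^r\beta$ directly off the sequence of images of $\alpha$, and then to obtain uniqueness by comparing two such factorizations. Under the right mapping convention, $x\delta^r\equiv x+r \pmod n$ (with representatives in $X_n$), and $x\alpha=(x\delta^r)\beta$. So prescribing $\alpha=\delta^r\beta$ is the same as setting $y\beta=(y-r)\alpha$ with indices taken mod $n$. In other words, the image sequence $(1\beta,\dots,n\beta)$ is the image sequence $(1\alpha,\dots,n\alpha)$ rotated cyclically. The claim therefore reduces to a statement about sequences: a cyclic sequence admits a rotation that is nondecreasing, and this rotation is unique unless the sequence is constant.

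\textbf{Existence.} Write $a_x=x\alpha$. If $a_1\leqslant a_2\leqslant\cdots\leqslant a_n$, then $\alpha\in\mathcal O_n$ and we take $r=0$, $\beta=\alpha$. Otherwise there is an index $i\in\{1,\dots,n-1\}$ with $a_i>a_{i+1}$. Since $\alpha\in\mathcal{OP}_n$ has at most one cyclic descent, this $i$ is the only descent, including the wrap-around position. Hence $a_n\leqslant a_1$, and
\[a_{i+1}\leqslant\cdots\leqslant a_n\leqslant a_1\leqslant\cdots\leqslant a_i.\]
Take $r=n-i$ and define $\beta$ by $y\beta=(y-r)\alpha$ (mod $n$). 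Then $1\beta=a_{i+1}$, and in general $(1\beta,\dots,n\beta)=(a_{i+1},\dots,a_n,a_1,\dots,a_i)$, which is nondecreasing. So $\beta\in\mathcal O_n$, and one checks that $(x\delta^r)\beta=x\alpha$.

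\textbf{Uniqueness.} Suppose $\delta^r\beta=\delta^s\beta'$ with $\beta,\beta'\in\mathcal O_n$, $0\leqslant r,s\leqslant n-1$ and $r\ne s$. Since $\delta$ is a permutation of order $n$, this gives $\beta'=\delta^{k}\beta$ with $k\equiv r-s\not\equiv0 \pmod n$. Thus the image sequence of $\beta'$ is a nontrivial rotation of the image sequence $b_1\leqslant\cdots\leqslant b_n$ of $\beta$, namely $(b_{j+1},\dots,b_n,b_1,\dots,b_j)$ for some $1\leqslant j\leqslant n-1$. This rotated sequence is nondecreasing, so in particular $b_n\leqslant b_1$. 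Combined with $b_1\leqslant b_n$, all the $b_x$ are equal, so $\beta$ and hence $\alpha=\delta^r\beta$ are constant. Finally, if $r=s$, then $\beta=\beta'$ because $\delta^r$ is invertible.

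\textbf{Main obstacle.} The only delicate point is the bookkeeping of the direction of the rotation under the right-action convention, i.e.\ checking that $r=n-i$ rather than $r=i$. I would verify this on the example $\alpha=\delta$ itself, where we should get $r=1$ and $\beta=\mathrm{id}$. A secondary point is noting that a constant $\alpha$ genuinely admits all $n$ choices of $r$, which explains the exception in the statement.
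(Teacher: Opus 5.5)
Your proof is correct: rotating the image sequence at its unique cyclic descent (with $r=n-i$, consistent with $x\delta^r\equiv x+r \pmod n$ under the right-action convention) gives existence, and two distinct nondecreasing rotations of $b_1\leqslant\cdots\leqslant b_n$ force $b_n\leqslant b_1\leqslant b_n$, so $\beta$ and hence $\alpha$ are constant, which gives uniqueness. The paper gives no proof of its own here and simply quotes Catarino and Higgins, and your rotation argument is the standard proof of this fact; your remark that a constant map has exactly $n$ factorizations is also what the paper later uses for the corrections $-(n-1)$ in $S(n,1)$ and $-n(n-1)$ in $|\mathcal{OP}_n|$.
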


\begin{proposition}
{\rm\cite[Corollary 2.7]{higgins1999}}\label{|OPn|}
$|\mathcal{OP}_n|=n|\mathcal{O}_n|-n(n-1)=n\binom{2n-1}{n-1}-n(n-1)$.
\end{proposition}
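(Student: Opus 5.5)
The plan is to count $\mathcal{OP}_n$ through the factorization of Theorem~\ref{factorization}. Consider the map
\[
\Phi:\{0,1,\ldots,n-1\}\times\mathcal{O}_n\to\mathcal{OP}_n,\qquad (r,\beta)\mapsto\delta^r\beta .
\]

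First I will check that $\Phi$ is well defined. We have $\delta\in\mathcal{OP}_n$ (observed above) and $\mathcal{O}_n\subseteq\mathcal{OP}_n$. The set $\mathcal{OP}_n$ is a submonoid of $\mathcal{T}_n$, so it is closed under composition. Hence $\delta^r\beta\in\mathcal{OP}_n$.

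Next, Theorem~\ref{factorization} gives two facts. It says $\Phi$ is surjective. It also says every non-constant $\alpha\in\mathcal{OP}_n$ has exactly one preimage under $\Phi$. So
\[
|\mathcal{OP}_n| = n|\mathcal{O}_n| - \sum_{\alpha\ \text{constant}}\bigl(|\Phi^{-1}(\alpha)|-1\bigr).
\]

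The one step needing an actual argument is computing the fibres over the constant maps. Let $c_k$ denote the constant map with image $\{k\}$, for $k\in X_n$. Each $c_k$ lies in $\mathcal{O}_n$.

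Suppose $\delta^r\beta=c_k$. Since $\delta^r$ is a bijection of $X_n$, $\beta$ must take the value $k$ on every element of $X_n$. So $\beta=c_k$. Conversely, $\delta^r c_k=c_k$ for every $r$, because under the right-action convention we apply $\delta^r$ first and then $c_k$.

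Therefore $\Phi^{-1}(c_k)=\{(r,c_k):0\leqslant r\leqslant n-1\}$, which has exactly $n$ elements. Summing the excess $n-1$ over the $n$ constant maps gives
\[
|\mathcal{OP}_n|=n|\mathcal{O}_n|-n(n-1).
\]

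Finally, I substitute Howie's count $|\mathcal{O}_n|=\binom{2n-1}{n-1}$ \cite{howie1971} to obtain the second expression. I do not expect any real obstacle here. All the substantive content, namely existence and uniqueness of the factorization, is supplied by Theorem~\ref{factorization}. The only point requiring care is identifying precisely the fibres over the constant maps.
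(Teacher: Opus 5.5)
Your proof is correct and follows the standard derivation. The paper itself only cites this count from Catarino and Higgins, where it is obtained as a corollary of the factorization theorem in the same way: count the $n|\mathcal{O}_n|$ pairs $(r,\beta)$, then correct for the $n$ constant maps, each of which arises exactly $n$ times. Your fibre computation (that $\delta^r\beta=c_k$ forces $\beta=c_k$ because $\delta^r$ is a bijection applied first) is the same correction the paper uses in its computations of $S(n,1)$ and $F(n,1)$.
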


For each $0\leqslant k\leqslant n-1$, define a total order $\leqslant_k$ on $X_n$ by $k+1\leqslant_k k+2\leqslant_k \cdots\leqslant_k n\leqslant_k 1\leqslant_k\cdots\leqslant_k k$, especially $\leqslant_0$ is just $\leqslant$ as $k=0$. Let $\mathcal{O}_n^k=\{\delta^{-k}\beta\delta^k:\beta\in\mathcal{O}_n\}$. Then $\mathcal{O}_n^k$ is the monoid of all order-preserving transformations on $X_n$ with respect to $\leqslant_k$ by {\rm\cite[Proposition 4.2]{higgins1999}}. Consider the mapping $\varphi_k$ from $\mathcal{O}_n$ to $\mathcal{OP}_n$ by $\beta\varphi_k=\delta^{-k}\beta\delta^k$.

\begin{proposition}
{\rm\cite[Lemma 4.1]{higgins1999}}\label{varphi_k}
The monoid $\mathcal{O}_n^k$ is a submonoid of $\mathcal{OP}_n$ and $\varphi_k$ is an isomorphism from $\mathcal{O}_n$ to $\mathcal{O}_n^k$.
\end{proposition}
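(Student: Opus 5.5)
The statement has two parts: $\mathcal{O}_n^k\subseteq\mathcal{OP}_n$ is a submonoid, and $\varphi_k$ is an isomorphism onto it. My plan is to deduce both from the fact that $\delta$ is a permutation lying in $\mathcal{OP}_n$, so that $\varphi_k$ is the restriction of an inner automorphism of $\mathcal{T}_n$. I will also record why $\mathcal{O}_n^k$ coincides with the order-preserving maps for $\leqslant_k$.

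\emph{Step 1: $\varphi_k$ is an injective monoid homomorphism.} Since $\delta$ is a permutation, $\delta^{-k}=\delta^{n-k}$ exists in $\mathcal{T}_n$. For $\beta,\gamma\in\mathcal{O}_n$ we have
\[(\beta\gamma)\varphi_k=\delta^{-k}\beta\delta^k\delta^{-k}\gamma\delta^k=(\beta\varphi_k)(\gamma\varphi_k).\]
We also have $1\varphi_k=1$. If $\delta^{-k}\beta\delta^k=\delta^{-k}\gamma\delta^k$, multiplying on the left by $\delta^k$ and on the right by $\delta^{-k}$ gives $\beta=\gamma$. By definition the image of $\varphi_k$ is exactly $\mathcal{O}_n^k$. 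Hence $\mathcal{O}_n^k$ is closed under composition and contains the identity, being the image of a monoid homomorphism, and $\varphi_k\colon\mathcal{O}_n\to\mathcal{O}_n^k$ is a bijective homomorphism, i.e.\ an isomorphism.

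\emph{Step 2: $\mathcal{O}_n^k\subseteq\mathcal{OP}_n$.} We already know that $\delta\in\mathcal{OP}_n$, so $\delta^k\in\mathcal{OP}_n$ and $\delta^{-k}=\delta^{n-k}\in\mathcal{OP}_n$. Also $\mathcal{O}_n\subseteq\mathcal{OP}_n$, and $\mathcal{OP}_n$ is a monoid. Therefore $\delta^{-k}\beta\delta^k\in\mathcal{OP}_n$, and together with Step 1 this shows $\mathcal{O}_n^k$ is a submonoid of $\mathcal{OP}_n$.

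\emph{Step 3 (the $\leqslant_k$ description).} Under the right-action convention, $\delta^{-k}$ sends $k+1\mapsto1,\ k+2\mapsto2,\dots,n\mapsto n-k,\ 1\mapsto n-k+1,\dots,k\mapsto n$. So $\delta^{-k}$ is an order isomorphism $(X_n,\leqslant_k)\to(X_n,\leqslant)$, and $\delta^k$ is its inverse. Consequently $\alpha=\delta^{-k}\beta\delta^k$ preserves $\leqslant_k$ if and only if $\beta=\delta^k\alpha\delta^{-k}$ preserves $\leqslant$.

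The only point needing care is keeping the right-mapping convention straight in Step 3, i.e.\ checking that it is $\delta^{-k}$, not $\delta^k$, that carries $\leqslant_k$ onto $\leqslant$. Everything else is formal.
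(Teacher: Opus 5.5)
Your proof is correct. The paper gives no proof of its own here (it cites \cite[Lemma 4.1]{higgins1999}); your argument is the standard one. It conjugates by the unit $\delta\in\mathcal{OP}_n$ and uses that $\mathcal{OP}_n$ is closed under composition and contains $\mathcal{O}_n$, which is the same reasoning the paper itself uses in the proof of Theorem~\ref{S(n,x)}. Your Step 3 is also correct but not needed for this statement, since the $\leqslant_k$ description is the separately cited \cite[Proposition 4.2]{higgins1999}.
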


\begin{proposition}
{\rm\cite[Lemma 4.7]{higgins1999}}\label{F(h)}
Let $\gamma=\delta^{-k}\beta\delta^k\in\mathcal{O}_n^k$, where $0\leqslant k\leqslant n-1$ and $\beta\in\mathcal{O}_n$. Then $x\in F(\gamma)$ if and only if $x-k\in F(\beta)$ for each $x\in X_n$.
\end{proposition}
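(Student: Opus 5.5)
The plan is a direct computation using the right mapping convention together with the fact that $\delta$ is a permutation. First I will record how powers of $\delta$ act. Since $\delta=(1\,2\cdots n)$, we have $y\delta=y+1$ for $y<n$ and $n\delta=1$. By induction, $y\delta^{k}$ is the unique element of $X_n$ congruent to $y+k$ modulo $n$. Likewise $y\delta^{-k}$ is the unique element of $X_n$ congruent to $y-k$ modulo $n$. Throughout, $x-k$ in the statement is read in this sense, as the representative in $X_n$ of the residue class of $x-k$. For $0\leqslant k\leqslant n-1$, the maps $\delta^{k}$ and $\delta^{-k}$ are mutually inverse bijections of $X_n$.

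Next, for $x\in X_n$, I will compute $x\gamma$ using the right mapping convention:
\[x\gamma=\bigl(((x\delta^{-k})\beta)\bigr)\delta^{k}=\bigl((x-k)\beta\bigr)\delta^{k}.\]
The condition $x\gamma=x$ therefore reads $\bigl((x-k)\beta\bigr)\delta^{k}=x$. Because $\delta^{k}$ is a bijection with inverse $\delta^{-k}$, this is equivalent to
\[(x-k)\beta=x\delta^{-k}=x-k.\]
This says exactly that $x-k\in F(\beta)$. Every step is an equivalence, so both directions of the statement follow at once.

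There is no real obstacle here. The only point needing care is to make the modular convention for $x-k$ explicit, so that $x-k$ denotes an element of $X_n$. One could also note, as a consequence, that $x\mapsto x-k$ restricts to a bijection $F(\gamma)\to F(\beta)$, so that $|F(\gamma)|=|F(\beta)|$. This is presumably the form in which the proposition is used later, together with Proposition \ref{varphi_k} and Theorem \ref{On}.
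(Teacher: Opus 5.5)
Your proof is correct. You unwind $x\gamma=((x\delta^{-k})\beta)\delta^{k}$ under the right-action convention, cancel the bijection $\delta^{k}$, and read $x-k$ modulo $n$, which matches the convention $y\delta^{-r}=y-r$ the paper uses in the proof of Theorem \ref{S(n,x)}. The paper gives no proof of its own and just cites Catarino--Higgins, so your direct check is a complete self-contained substitute; it also never uses that $\beta$ is order-preserving, so it holds for every $\beta\in\mathcal{T}_n$.
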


It has been observed that each $\alpha\in\mathcal{T}_n$ can be pictured as a digraph on $n$ vertices with $ij$ an arc of $\alpha$ if $i\alpha=j$. Each connected component of such a digraph is functional, meaning that it consists of a unique cycle, together with a number of trees rooted around the points of the cycle. For example, Figure \ref{figure_1} is the digraph of $\alpha=\binom{1\quad2\quad3\quad4\quad5}{2\quad5\quad2\quad5\quad5}$. For further background see \cite{higgins1992}.

\begin{figure}[H]
  \centering
    \begin{tikzpicture}[scale=0.5]
      \node[fill, circle, inner sep=1pt, label={above:1}] (p1) at (0,0) {};

      \node[fill, circle, inner sep=1pt, label={above:2}] (p2) at (2,0) {};

      \node[fill, circle, inner sep=1pt, label={above:3}] (p3) at (4,0) {};

      \node[fill, circle, inner sep=1pt, label={above:4}] (p4) at (6,0) {};

      \node[fill, circle, inner sep=1pt, label={above:5}] (p5) at (8,0) {};
      \draw (8.5,0) circle (0.5);
      \draw[thick,->] (9.0,0) arc (0:690:0.5);

      \draw[thick,->] (p1) -- (p2);
      \draw[thick,->] (p3) -- (p2);
      \draw[thick,->] (p4) -- (p5);
      \draw[thick,->] (p2) to[out=-45, in=-135] (p5);
    \end{tikzpicture}
    \caption{Digraph of $\alpha=\binom{1\quad2\quad3\quad4\quad5}{2\quad5\quad2\quad5\quad5}$}
  \label{figure_1}
\end{figure}

\begin{proposition}
{\rm\cite[Lemma 4.10]{higgins1999}}\label{interval}
Let $\alpha\in \mathcal{OP}_n$. If $F(\alpha)\neq \varnothing$, then the digraph of $\alpha$ is a forest and each component associated with a fixed point of $\alpha$ is an interval.
\end{proposition}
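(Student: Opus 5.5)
The plan is to reduce to the order-preserving monoid $\mathcal{O}_n$ by conjugating with a power of $\delta$. Everything relevant is transported by the isomorphisms $\varphi_k$ of Proposition~\ref{varphi_k}. Conjugation by $\delta^k$ only relabels the points of $X_n$ by a cyclic shift. Hence the digraph of $\gamma=\delta^{-k}\beta\delta^k$ is the digraph of $\beta$ with vertices renamed by $y\mapsto y+k \pmod n$. Proposition~\ref{F(h)} records this for fixed points. Being a forest is preserved by such a relabelling, and an interval of $X_n$ goes to a cyclic interval, i.e.\ an interval for $\leqslant_k$. So it suffices to (i) show that $\alpha$ lies in some $\mathcal{O}_n^k$, and (ii) prove the statement for $\beta\in\mathcal{O}_n$.

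\textbf{Step (ii).} Let $\beta\in\mathcal{O}_n$. If $y<y\beta$, then applying $\beta$ repeatedly and using monotonicity gives $y<y\beta\leqslant y\beta^2\leqslant\cdots$. The orbit is therefore weakly increasing and stabilises at a fixed point, and the same holds symmetrically if $y>y\beta$. Hence no cycle of length at least $2$ exists, so the digraph is a forest whose roots are exactly the fixed points. For a fixed point $f$, its component is $C_f=\{y: y\beta^t=f \text{ for some } t\}$. If $y<z<w$ with $y,w\in C_f$, choose $t$ with $y\beta^t=w\beta^t=f$. Monotonicity then gives $f\leqslant z\beta^t\leqslant f$, so $z\in C_f$. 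Thus $C_f$ is an interval.

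\textbf{Step (i), the main obstacle.} We must show that an orientation-preserving $\alpha$ with a fixed point is order-preserving for one of the orders $\leqslant_k$. The subtlety is that the natural choice of $k$ does not always work. If $x\alpha=x$, one would like to cut the cycle at $x$ and use $\leqslant_{x-1}$. But a counterexample is $n=3$ with $\alpha=\binom{1\,2\,3}{1\,3\,1}$: here $\alpha$ is not order-preserving for $\leqslant_0$ or $\leqslant_1$, but it is for $\leqslant_2$.

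My plan is as follows. Read the image sequence of $\alpha$ in the order $\leqslant_{x-1}$, starting at $x$. Since this sequence is cyclic and starts at the $\leqslant_{x-1}$-minimum $x$, it splits into two parts:
\begin{itemize}
\item a weakly increasing block $x=x\alpha\leqslant_{x-1}\cdots$;
\item a final block of points, all mapped to $x$ (possibly empty).
\end{itemize}
Let $L$ be this final block. It is a $\leqslant_{x-1}$-terminal interval ending at $x-1$, with $L\alpha=\{x\}$. Now move the cut point to the first element of $L$, i.e.\ take $k$ so that $\leqslant_k$ begins at $\min_{\leqslant_{x-1}}L$. Then the $\leqslant_k$-ordered images read $x,\dots,x$ followed by the increasing block. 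I would check that $x$ is $\leqslant_k$-below every image in the increasing block. This holds because that block consists of images of points lying weakly $\leqslant_{x-1}$-above $x$, and these images precede $L$ cyclically unless they equal $x$. If this verification becomes messy, the fallback is Theorem~\ref{factorization}: write $\alpha=\delta^r\beta$ and use the fixed point to pin down the conjugating power directly. Once (i) holds, (ii) together with the transfer via $\varphi_k$ gives the proposition.
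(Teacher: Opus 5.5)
Your overall strategy is sound: conjugate into $\mathcal{O}_n$, prove the statement there, and transport back. Step (ii) and the transfer via $\varphi_k$ are correct. (The paper itself gives no proof of this proposition, only a citation.) The gap is in Step (i). Your cut point, chosen so that $\leqslant_k$ begins at $\min_{\leqslant_{x-1}}L$, is wrong in general. The justification that the images of the increasing block ``precede $L$ cyclically unless they equal $x$'' is false, because a point of the increasing block can be sent into $L$.

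Your own example shows this. For $\alpha=\binom{1\,2\,3}{1\,3\,1}$ and $x=1$ you get $L=\{3\}$, so your recipe gives $k=2$. But $2\alpha=3\in L$. Reading along $3<_2 1<_2 2$, the images are $1,1,3$, which is not $\leqslant_2$-nondecreasing because $3$ is the $\leqslant_2$-minimum. Your claims about this example are in fact reversed: $\alpha\notin\mathcal{O}_3^2$, whereas $\alpha\in\mathcal{O}_3^1$ (along $2<_1 3<_1 1$ the images read $3,1,1$). The fallback via Theorem~\ref{factorization} is not an argument. You also cannot simply invoke Theorem~\ref{equal} instead, since it sits downstream of this proposition.

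A correct cut is obtained as follows. Write $p_1=x,p_2,\dots,p_n=x-1$ for the points in $\leqslant_{x-1}$-order and $b_m=p_m\alpha$. In the non-trivial case you have $x=b_1\leqslant_{x-1}\cdots\leqslant_{x-1}b_j$ and $b_{j+1}=\cdots=b_n=x$, with $L=\{p_{j+1},\dots,p_n\}\neq\varnothing$.

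\begin{enumerate}
\item Let $i\leqslant j$ be the largest index with $b_i\leqslant_{x-1}p_i$; it exists because $b_1=p_1$. Take $k\equiv p_i \pmod n$, so that $\leqslant_k$ starts at $p_{i+1}$.
\item For $i<m\leqslant j$ you have $p_m<_{x-1}b_m$, hence $b_m\in\{p_{m+1},\dots,p_n\}\subseteq\{p_{i+1},\dots,p_n\}$.
\item The points of $L$ are sent to $x=p_1$.
\item For $m\leqslant i$ you have $b_m\leqslant_{x-1}b_i\leqslant_{x-1}p_i$, hence $b_m\in\{p_1,\dots,p_i\}$.
\end{enumerate}

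The order $\leqslant_k$ agrees with $\leqslant_{x-1}$ on $\{p_{i+1},\dots,p_n\}$ and on $\{p_1,\dots,p_i\}$, and puts the first set entirely below the second. So the images read along $\leqslant_k$, namely $b_{i+1},\dots,b_j,x,\dots,x,b_1,\dots,b_i$, are $\leqslant_k$-nondecreasing, i.e.\ $\alpha\in\mathcal{O}_n^k$. Your choice is exactly the special case $i=j$, when no point of the increasing block is mapped into $L$.

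You should also justify that the image sequence has at most one cyclic descent when values are compared in $\leqslant_{x-1}$ rather than $\leqslant$. This holds because the number of cyclic descents equals the number of times the image sequence winds around the cycle, which does not depend on where the cycle is cut. With this repair, your Step (ii) and the transfer complete the proof.
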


\begin{proposition}
{\rm\cite[Lemma 4.11 and Corollary 4.12]{higgins1999}}\label{leqslant_i-1}
Let $\alpha\in\mathcal{OP}_n$ such that $F(\alpha)\neq \varnothing$, and let $C$ be any component of $\alpha$. Then $C=[i,j]$ for some $i,j\in X_n$ and $\alpha$ is order-preserving with respect to $\leqslant_{i-1}$.
\end{proposition}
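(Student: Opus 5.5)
The plan is to reduce the statement to two facts. First, every component of $\alpha$ is a cyclic interval $[i,j]=\{i,i+1,\dots,j\}$, read modulo $n$. Second, when $X_n$ is listed in the order $\leqslant_{i-1}$, starting at the left end $i$ of a component, the image sequence of $\alpha$ has no descent.

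\textbf{Step 1: every component is an interval.} By Proposition \ref{interval}, since $F(\alpha)\neq\varnothing$, the digraph of $\alpha$ is a forest. Each component of a functional digraph contains a unique cycle, and in a forest that cycle must be a loop, that is, a fixed point. So every component $C$ is associated with a fixed point, and Proposition \ref{interval} gives $C=[i,j]$ as a (cyclic) interval.

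\textbf{Step 2: rotating the listing.} Write $x_1=i,\ x_2=i+1,\ \dots,\ x_n=i-1$, indices modulo $n$, and set $a_t=x_t\alpha$. Compare the $a_t$ using $\leqslant_{i-1}$. A cyclic rotation of a cyclic sequence is again cyclic. Relabelling the values by the order isomorphism $\leqslant_{i-1}\to\leqslant$, namely $y\mapsto y-(i-1)$, also preserves cyclicity. Equivalently, one can pass to $\delta^{i-1}\alpha\delta^{-(i-1)}$, in the spirit of Proposition \ref{F(h)}. So $(a_1,\dots,a_n)$ has at most one index $t$, with $a_{n+1}=a_1$, such that $a_t>_{i-1}a_{t+1}$. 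Order-preservation with respect to $\leqslant_{i-1}$ means exactly that no such descent occurs for $t<n$. Hence it suffices to show that the wrap-around step $a_n\to a_1$ is itself a descent, or that no descent exists at all.

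\textbf{Step 3: the wrap-around step.} Since $\alpha$ maps each component into itself, $a_1\in C$. The interval $C$ is an initial segment of $(X_n,\leqslant_{i-1})$. Let $C'$ be the component containing $x_n=i-1$; it is a final segment, and $a_n\in C'$.
\begin{itemize}
\item If $C'\neq C$, then every element of $C'$ is $>_{i-1}$ every element of $C$. So $a_n>_{i-1}a_1$, the wrap is the unique descent, and the internal sequence is nondecreasing, as required.
\item If $C'=C$, then $C=X_n$ and $\alpha$ has exactly one fixed point $c$, with every point eventually mapped to $c$. Here the claim is only that \emph{some} choice of $i$ works.
\end{itemize}

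\textbf{The main obstacle: the case $C=X_n$.} In this case I would choose the starting point deliberately, for instance $i$ with $i-1$ or $i$ adjacent to the fixed point $c$, or $i$ just after the unique descent of the original image sequence. I would then verify directly that this rotation has $a_n\geqslant_{i-1}a_1$ with a descent at the wrap, or no descent at all. To do this I would use that $\alpha$ has no fixed point other than $c$, so the cyclic image sequence cannot wrap past the diagonal more than once. This is where I expect the main difficulty. An alternative route is the factorization of Theorem \ref{factorization}, $\alpha=\delta^r\beta$: locate where the $\delta^r$-shift breaks monotonicity and take $i$ there.
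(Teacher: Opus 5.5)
Your Steps 1--3 are correct and settle every component $C\neq X_n$, that is, every $\alpha$ with $|F(\alpha)|\geqslant 2$. This is the only case used later, in Lemma \ref{K}.

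The genuine gap is the case $C=X_n$, i.e.\ $|F(\alpha)|=1$, which you leave open. It is not a technicality: there the statement says precisely that an orientation-preserving map with a single fixed point lies in some $\mathcal{O}_n^k$. Your Step 3 test only handles the easy sub-case. For $i=c+1$ the wrap goes from $c\alpha=c$, the maximum of $\leqslant_c$, to $(c+1)\alpha$, so $i=c+1$ works iff $(c+1)\alpha\neq c$ (or $\alpha$ is constant).

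Beyond that, none of the choices you list works in general. Take $\alpha=\binom{1\;2\;3\;4\;5}{1\;1\;2\;5\;1}\in\mathcal{OP}_5$. Its only cyclic descent is $5>1$, $F(\alpha)=\{1\}$, and it has a single component. Checking the five orders, $\alpha$ is order-preserving with respect to $\leqslant_{i-1}$ only for $i=4$: the images of $4,5,1,2,3$ are $5,1,1,1,2$. Neither $4$ nor $3$ is adjacent to $c=1$. The point just after the descent $5>1$ is $i=5$, and there the images $1,1,1,2,5$ of $5,1,2,3,4$ contain $2>_4 5$. So a further idea is needed to locate $i$.

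One way to supply it is a lift to $\mathbb{Z}$.
\begin{itemize}
\item Write $\alpha=\delta^r\beta$ with $\beta\in\mathcal{O}_n$ (Theorem \ref{factorization}). Extend $\beta$ by $\tilde\beta(y+qn)=y\beta+qn$ for $y\in X_n$, $q\in\mathbb{Z}$, and set $\tilde a(x)=\tilde\beta(x+r)$.
\item Then $\tilde a$ is nondecreasing, $\tilde a(x+n)=\tilde a(x)+n$, and $\tilde a(x)\equiv x\alpha \pmod n$. Subtracting a multiple of $n$, you may assume $\tilde a(c)=c$.
\item The function $g(x)=\tilde a(x)-x$ is $n$-periodic with $g(c)=0$.
\item If $g(k)\leqslant 0\leqslant g(k+1)$, then $\tilde a(k+1),\dots,\tilde a(k+n)=\tilde a(k)+n$ all lie in $[k+1,k+n]$. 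Reduction modulo $n$ carries this interval order-isomorphically onto $(X_n,\leqslant_k)$, reading $k$ modulo $n$. Hence $(k+1)\alpha\leqslant_k\cdots\leqslant_k(k+n)\alpha$, so $\alpha$ is order-preserving with respect to $\leqslant_k$, and $i=k+1$ works with $X_n=[k+1,k]$.
\item Such a $k$ always exists: take the largest $k\in\{c,\dots,c+n-1\}$ with $g(k)\leqslant 0$, and use $g(c+n)=0$.
\end{itemize}
In the example, $\tilde a=(1,1,2,5,6)$ on $1,\dots,5$, $g=(0,-1,-1,1,1)$, and $k=3$.

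Quoting Theorem \ref{equal} instead would close the gap formally. However, in \cite{higgins1999} that theorem comes after this corollary, so using it here is likely circular.
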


\begin{theorem}
{\rm\cite[Theorem 4.13]{higgins1999}}\label{equal}
$\bigcup_{k=0}^{n-1}\mathcal{O}_n^k=\{\alpha\in\mathcal{OP}_n:F(\alpha)\neq \varnothing\}$.
\end{theorem}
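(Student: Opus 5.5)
The plan is to prove the two inclusions separately. Both rest almost entirely on the structural results already quoted: Proposition \ref{varphi_k}, Proposition \ref{F(h)}, Proposition \ref{leqslant_i-1}, and the description of $\mathcal{O}_n^k$ as the monoid of all transformations that are order-preserving with respect to $\leqslant_k$.

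For the inclusion $\bigcup_{k=0}^{n-1}\mathcal{O}_n^k\subseteq\{\alpha\in\mathcal{OP}_n:F(\alpha)\neq\varnothing\}$, take $\gamma=\delta^{-k}\beta\delta^k$ with $\beta\in\mathcal{O}_n$.
\begin{enumerate}[(i)]
\item By Proposition \ref{varphi_k}, $\gamma\in\mathcal{OP}_n$, so it only remains to find a fixed point of $\gamma$.
\item Every $\beta\in\mathcal{O}_n$ has a fixed point. Since $n\beta\leqslant n$, the set $\{x: x\beta\leqslant x\}$ is nonempty; let $x$ be its least element.
\item If $x=1$, then $1\beta\leqslant 1$ forces $1\beta=1$.
\item If $x>1$, minimality gives $(x-1)\beta>x-1$, so $(x-1)\beta\geqslant x$. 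Order preservation then gives $x\beta\geqslant (x-1)\beta\geqslant x$, and hence $x\beta=x$.
\item Proposition \ref{F(h)} now shows that $x+k$ (read modulo $n$ in $X_n$) lies in $F(\gamma)$, so $F(\gamma)\neq\varnothing$.
\end{enumerate}

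For the reverse inclusion, let $\alpha\in\mathcal{OP}_n$ with $F(\alpha)\neq\varnothing$.
\begin{enumerate}[(i)]
\item Choose any component $C$ of the digraph of $\alpha$, for instance the component containing a fixed point.
\item By Proposition \ref{leqslant_i-1}, $C=[i,j]$ for some $i,j\in X_n$, and $\alpha$ is order-preserving with respect to $\leqslant_{i-1}$, where $0\leqslant i-1\leqslant n-1$.
\item By the characterization of $\mathcal{O}_n^{k}$ recalled before Proposition \ref{varphi_k} (Proposition 4.2 of \cite{higgins1999}), a full transformation that is order-preserving with respect to $\leqslant_{i-1}$ lies in $\mathcal{O}_n^{i-1}$. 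Hence $\alpha\in\mathcal{O}_n^{i-1}\subseteq\bigcup_k\mathcal{O}_n^k$.
\end{enumerate}

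There is no genuinely hard step once these results are taken as given. The only points needing care are the following.
\begin{enumerate}[(i)]
\item The modular bookkeeping in Proposition \ref{F(h)}: one must check that $x-k$ is interpreted in $X_n$ modulo $n$, consistently with the convention for $\delta$.
\item Making sure that ``order-preserving with respect to $\leqslant_{i-1}$'' matches exactly the defining property of $\mathcal{O}_n^{i-1}$. If that characterization were not available, I would verify directly that $\delta^{i-1}\alpha\delta^{-(i-1)}$ is order-preserving with respect to $\leqslant$, using that $\delta^{i-1}$ maps the chain $(X_n,\leqslant)$ isomorphically onto $(X_n,\leqslant_{i-1})$.
\end{enumerate}
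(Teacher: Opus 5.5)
Your proof is correct. The forward inclusion holds: every $\beta\in\mathcal{O}_n$ has a fixed point, and Proposition~\ref{F(h)} transfers it to $\delta^{-k}\beta\delta^k$. The reverse inclusion also holds, from Proposition~\ref{leqslant_i-1} together with the description of $\mathcal{O}_n^{k}$ as the $\leqslant_k$-order-preserving maps. The paper gives no proof of its own here; it only quotes the result from Catarino and Higgins. Your argument is the natural derivation from the lemmas of theirs that the paper lists just before this theorem.
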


\section{$S(n,x)$ and the expectation in $\mathcal{OP}_n$}\label{Section2}
In this section, we prove that $S(n,x)=\binom{2n-1}{n-1}-(n-1)$ in $\mathcal{OP}_n$. Further, we compute the expectation of $|F(\alpha)|$ for $\alpha\in\mathcal{OP}_n$ and show that $E(|F(\alpha)|)=1$. For all $i,j\in X_n$, let $N(i,j)=|\{\beta\in\mathcal{O}_n:i\beta=j\}|$.

\begin{lemma}
For natural number $1\leqslant i,j\leqslant n$,
\begin{equation}
\label{N(i,j)}
N(i,j)=\binom{i+j-2}{i-1}\binom{2n-i-j}{n-i}.
\end{equation}
\end{lemma}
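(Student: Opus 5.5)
An order-preserving full transformation $\beta\in\mathcal{O}_n$ is the same thing as a weakly increasing sequence $(1\beta,\dots,n\beta)$ with entries in $X_n$. The condition $i\beta=j$ splits such a sequence into two independent pieces that can be counted separately, and I would count each piece directly.

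\textbf{Step 1 (splitting).} If $i\beta=j$, then order preservation forces:
\begin{itemize}
\item $1\beta\leqslant\cdots\leqslant(i-1)\beta\leqslant j$, so the restriction of $\beta$ to $\{1,\dots,i-1\}$ is a weakly increasing map into $\{1,\dots,j\}$;
\item $j\leqslant(i+1)\beta\leqslant\cdots\leqslant n\beta$, so the restriction of $\beta$ to $\{i+1,\dots,n\}$ is a weakly increasing map into $\{j,\dots,n\}$.
\end{itemize}
Conversely, any pair of such weakly increasing sequences, glued together with the value $j$ at position $i$, gives a weakly increasing full transformation with $i\beta=j$. This defines a bijection between $\{\beta\in\mathcal{O}_n: i\beta=j\}$ and the set of such pairs. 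Hence $N(i,j)$ is the product of the two counts.

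\textbf{Step 2 (stars and bars).} The number of weakly increasing sequences of length $a$ with entries from a $b$-element chain equals the number of multisets of size $a$ from $b$ objects, namely $\binom{a+b-1}{a}$.
\begin{itemize}
\item For the left piece, $a=i-1$ and $b=j$, giving $\binom{i+j-2}{i-1}$.
\item For the right piece, $a=n-i$ and $b=n-j+1$, giving $\binom{2n-i-j}{n-i}$.
\end{itemize}
Multiplying the two counts yields (\ref{N(i,j)}).

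As a sanity check, putting $i=j=x$ recovers Higgins' formula $S(n,x)=\binom{2x-2}{x-1}\binom{2n-2x}{n-x}$ for $\mathcal{O}_n$ quoted in the introduction.

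There is no real obstacle here. The only point needing care is the boundary cases $i=1$ or $i=n$, where one piece is an empty sequence. There the corresponding binomial coefficient is $\binom{j-1}{0}=1$ or $\binom{n-j}{0}=1$, consistent with the single empty sequence, so the formula holds without modification.
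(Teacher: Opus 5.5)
Your proposal is correct and follows essentially the same approach as the paper: split the condition $i\beta=j$ into weakly increasing maps from $\{1,\dots,i-1\}$ into a $j$-element chain and from $\{i+1,\dots,n\}$ into an $(n-j+1)$-element chain, then count each with $\binom{m+r-1}{r}$. You state the gluing bijection and the empty-sequence boundary cases more explicitly than the paper, but the argument is the same.
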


\begin{proof}
The number of transformations $\beta\in\mathcal{O}_n$ with $i\beta=j$ for all $i,j \in X_n$ is the product of the
number of order-preserving transformations from $X_{i-1}$ to $X_j$ and the number of such transformations from $X_{n-i}$ to $X_{n-j+1}$. Note that the number of order-preserving transformations from a set of order $r$ to a set of order $m$ is $\binom{m+r-1}{r}$. Therefore the number $N(i,j)$ of $\beta\in\mathcal{O}_n$ with $i\beta=j$ is $\binom{i+j-2}{i-1}\binom{2n-i-j}{n-i}$.\qedhere
\end{proof}

\begin{lemma}\label{S(n,1)}
The number of elements of $\mathcal{OP}_n$ with fixed point $1$ is
\[S(n,1)=\binom{2n-1}{n-1}-(n-1).\]
\end{lemma}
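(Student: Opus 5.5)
We count the $\alpha\in\mathcal{OP}_n$ with $1\alpha=1$ by splitting according to the factorization of Theorem \ref{factorization}, $\alpha=\delta^r\beta$ with $0\leqslant r\leqslant n-1$ and $\beta\in\mathcal{O}_n$. This factorization is unique unless $\alpha$ is constant. The only constant map fixing $1$ is the constant map with image $\{1\}$. We therefore first count pairs $(r,\beta)$ with $1\delta^r\beta=1$, and afterwards correct for the overcounting coming from that one constant map.

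For a fixed $r$, we have $1\delta^r=r+1$, so the condition $1\alpha=1$ becomes $(r+1)\beta=1$. By Lemma \eqref{N(i,j)} with $i=r+1$ and $j=1$, the number of such $\beta\in\mathcal{O}_n$ is
\[N(r+1,1)=\binom{r}{r}\binom{2n-r-2}{n-r-1}=\binom{2n-(r+1)-1}{n-(r+1)}.\]
Summing over $r=0,\dots,n-1$ and substituting $i=r+1$ gives
\[\sum_{i=1}^n\binom{2n-i-1}{n-i},\]
which equals $\binom{2n-1}{n-1}$ by identity \eqref{2}.

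It remains to handle the overcounting. The constant map $c_1$ with image $\{1\}$ satisfies $c_1=\delta^r c_1$ for every $r$, and $c_1\in\mathcal{O}_n$. Hence $c_1$ arises from exactly $n$ pairs $(r,\beta)$, one for each $r$, namely $(r,c_1)$. We must check that no other $\beta$ gives $\delta^r\beta=c_1$. This holds because $\delta^r$ is a bijection, so $\beta=\delta^{-r}c_1=c_1$.

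Every non-constant $\alpha$ with $1\alpha=1$ is counted exactly once, by uniqueness of the factorization. So the sum counts $c_1$ exactly $n$ times, $n-1$ more than it should. Subtracting these gives
\[S(n,1)=\binom{2n-1}{n-1}-(n-1).\]

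The only delicate point is this bookkeeping for constant maps. Constants other than $c_1$ do not fix $1$ and so never enter the count. All the rest is a direct application of the lemma and identity \eqref{2}.
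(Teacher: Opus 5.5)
Your proposal is correct and follows essentially the same route as the paper. Like the paper, you count pairs $(r,\beta)$ with $(r+1)\beta=1$ via $N(r+1,1)$, sum using identity (\ref{2}), and subtract $n-1$ because the constant map onto $1$ is counted $n$ times. Your explicit check that this constant map arises from exactly the $n$ pairs $(r,c_1)$ is a small improvement on the paper's bookkeeping.
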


\begin{proof}
For each $x\in X_n$, consider the mapping
\[\varphi:\{(r,\beta):0\leqslant r\leqslant n-1,\beta\in O_n,x\delta^r\beta=x\} \longrightarrow \{\alpha\in\mathcal{OP}_n: x\alpha=x\}\]
by $(r,\beta)\varphi=\alpha$. By Proposition \ref{factorization}, $\varphi$ is a bijection when $\alpha$ is non-constant. If $\alpha$ is constant with fixed point $x$, then $r=0,1,\ldots,n-1$ and $\beta$ is constant with $x$. Let $x=1$. The number of elements of $\mathcal{OP}_n$ with fixed point $1$ is
\begin{align*}
S(n,1)
&\;=\;\;\sum_{r=0}^{n-1}|\{\beta\in O_n: (1+r)\beta=1\}|-(n-1) =\sum_{i=1}^n|N(i,1)|-(n-1)\\
&\stackrel{(\ref{N(i,j)})}{=}\sum_{i=1}^n\binom{2n-i-1}{n-i}-(n-1)\stackrel{(\ref{2})}{=}\binom{2n-1}{n-1}-(n-1).\qedhere
\end{align*}
\end{proof}

\begin{theorem}\label{S(n,x)}
The number of elements of $\mathcal{OP}_n$ with fixed point $x$ is
\[S(n,x)=\binom{2n-1}{n-1}-(n-1)\]
where $1\leqslant x\leqslant n$.
\end{theorem}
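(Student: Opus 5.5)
The plan is to reduce the general case to Lemma \ref{S(n,1)} by conjugating with the $n$-cycle $\delta$. Since $\delta\in\mathcal{OP}_n$ and $\mathcal{OP}_n$ is a monoid containing the permutation $\delta$ and its inverse $\delta^{-1}=\delta^{n-1}$, the map $\psi:\alpha\mapsto\delta^{-(x-1)}\alpha\delta^{x-1}$ is a bijection (indeed an automorphism) of $\mathcal{OP}_n$, with inverse $\alpha\mapsto\delta^{x-1}\alpha\delta^{-(x-1)}$. So the first step is simply to note closure: both maps send $\mathcal{OP}_n$ into itself because they are products of elements of $\mathcal{OP}_n$.

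Next I check that $\psi$ transports fixed points in the right way. Under the right-action convention, $1\delta^{x-1}=x$. Hence for $\alpha\in\mathcal{OP}_n$ we have
\[1(\delta^{x-1}\alpha\delta^{-(x-1)})=1 \iff x\alpha\delta^{-(x-1)}=1 \iff x\alpha=x.\]
Therefore $\alpha\mapsto\delta^{x-1}\alpha\delta^{-(x-1)}$ restricts to a bijection from $\{\alpha\in\mathcal{OP}_n:x\alpha=x\}$ onto $\{\gamma\in\mathcal{OP}_n:1\gamma=1\}$. This is the same phenomenon recorded in Proposition \ref{F(h)}, where fixed points shift by $k$ under conjugation. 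It follows that $S(n,x)=S(n,1)$, and Lemma \ref{S(n,1)} gives the stated value $\binom{2n-1}{n-1}-(n-1)$.

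The only delicate point is the bookkeeping of the direction of the shift under the right-mapping convention: one has to decide whether $1\delta^{x-1}$ equals $x$ or $2-x \pmod n$. This does not affect the argument, since any power $\delta^k$ with $1\delta^k=x$ works, and such a $k$ always exists because $\delta$ is an $n$-cycle.

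An alternative route is to redo the computation of Lemma \ref{S(n,1)} directly for general $x$, summing $N(i,x)$ over the relevant $r$ via Theorem \ref{factorization}. That route requires a Vandermonde-type identity and more casework on the constant maps, so I would use the conjugation argument.
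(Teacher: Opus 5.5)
Your proposal is correct and is essentially the paper's own argument: conjugation by a power of $\delta$ is a bijection of $\mathcal{OP}_n$ that carries the maps fixing $x$ onto the maps fixing $1$, so $S(n,x)=S(n,1)$, and Lemma~\ref{S(n,1)} gives the value. Your explicit ``if and only if'' check is slightly more careful than the paper's proof, which verifies only the forward implication and leaves surjectivity onto the $x$-fixing maps implicit.
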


\begin{proof}
For $0\leqslant r\leqslant n-1$, since the sequences of images $(1+r,2+r,\ldots, n+r)$ and $(1-r,2-r,\ldots, n-r)$ of the ordered set $X_n$ under $\delta^r$ and $\delta^{-r}$ are cyclic respectively, $\delta^r$ and $\delta^{-r}$ lie in $\mathcal{OP}_n$. Thus $\delta^{-r}\alpha\delta^r\in \mathcal{OP}_n$ for $\alpha\in\mathcal{OP}_n$. Consider the mappings $\varphi_r$ and $\varphi_{-r}$ from $\mathcal{OP}_n$ to $\mathcal{OP}_n$ by $\alpha\varphi_r=\delta^{-r}\alpha \delta^r$ and $\alpha\varphi_{-r}=\delta^r\alpha \delta^{-r}$,
\[\alpha\varphi_r\varphi_{-r}=\delta^r(\delta^{-r}\alpha\delta^r)\delta^{-r}=\alpha \quad\text{and}\quad \alpha\varphi_{-r}\varphi_r=\delta^{-r}(\delta^r\alpha\delta^{-r})\delta^r=\alpha\]
for each $\alpha\in\mathcal{OP}_n$. Hence $\varphi_r$ has a two-sided inverse $\varphi_{-r}$ and is a bijection.

For each $\alpha\in\mathcal{OP}_n$, if $1\alpha=1$, then
\[(1+r)\alpha\varphi_r=(1+r)\delta^{-r}\alpha\delta^r=1\alpha \delta^r=1\delta^r=1+r\]
where $0\leqslant r\leqslant n-1$. Hence $\alpha\varphi_r$ is also fixed at the point $x=1+r$, where $1\leqslant x\leqslant n$.
Since $\varphi_r$ is a bijection, $S(n,x)=S(n,1)=\binom{2n-1}{n-1}-(n-1)$.\qedhere
\end{proof}

Some computed values of $S(n,x)$ for $1\leqslant x \leqslant n \leqslant 6$ are shown in Table \ref{table S(n,x)}.

\begin{table}[H]
\centering
\begin{threeparttable}
\setlength{\tabcolsep}{8pt}
\renewcommand{\arraystretch}{1.20}
\begin{tabular}{lccccccc}
\toprule
$n\backslash x$ & 1 & 2 & 3 & 4 & 5 & 6 & $\sum_xS(n,x)=|\mathcal{OP}_n|$ \\
\midrule
1 & 1   &     &     &     &     &     & 1 \\
2 & 2   & 2   &     &     &     &     & 4 \\
3 & 8   & 8   & 8   &     &     &     & 24 \\
4 & 32  & 32  & 32  & 32  &     &     & 128 \\
5 & 122 & 122 & 122 & 122 & 122 &     & 610 \\
6 & 457 & 457 & 457 & 457 & 457 & 457 & 2742 \\
\bottomrule
\end{tabular}
\caption{$S(n,x)$ for $1\leqslant x \leqslant n \leqslant 6$}
\label{table S(n,x)}
\end{threeparttable}
\end{table}

\begin{theorem}
For each $\alpha\in\mathcal{OP}_n$, the probability function of $x\alpha=x$ and the expectation of $|F(\alpha)|$ are
\[P(x\alpha=x)=\frac{1}{n} \quad\text{and}\quad E(|F(\alpha)|)=1\]
where $1\leqslant x\leqslant n$.
\end{theorem}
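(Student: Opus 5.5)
We take $\alpha$ uniformly distributed on $\mathcal{OP}_n$. The probability that $x\alpha=x$ is then $S(n,x)/|\mathcal{OP}_n|$. By Theorem \ref{S(n,x)}, $S(n,x)=\binom{2n-1}{n-1}-(n-1)$ for every $1\leqslant x\leqslant n$. By Proposition \ref{|OPn|},
\[|\mathcal{OP}_n|=n\binom{2n-1}{n-1}-n(n-1)=n\Bigl(\binom{2n-1}{n-1}-(n-1)\Bigr)=nS(n,x).\]
Hence $P(x\alpha=x)=\frac{1}{n}$, independently of $x$. The factor $\binom{2n-1}{n-1}-(n-1)$ is positive for $n\geqslant 1$: it is $1$ at $n=1$, and it grows for larger $n$. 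So the quotient is well defined.

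For the expectation, write $|F(\alpha)|=\sum_{x=1}^n I_x(\alpha)$, where $I_x$ is the indicator of the event $x\alpha=x$. By linearity of expectation,
\[E(|F(\alpha)|)=\sum_{x=1}^n P(x\alpha=x)=n\cdot\frac1n=1.\]
Equivalently, double counting the pairs $(x,\alpha)$ with $x\alpha=x$ gives
\[\sum_{\alpha\in\mathcal{OP}_n}|F(\alpha)|=\sum_{x=1}^n S(n,x)=|\mathcal{OP}_n|,\]
which is also the last column of Table \ref{table S(n,x)}.

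There is no real obstacle here. The only point needing care is the algebraic identification $|\mathcal{OP}_n|=nS(n,x)$, which comes directly from Proposition \ref{|OPn|} together with Theorem \ref{S(n,x)}.
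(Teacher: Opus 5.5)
Your proposal is correct and matches the paper's proof: you divide $S(n,x)$ from Theorem \ref{S(n,x)} by $|\mathcal{OP}_n|$ from Proposition \ref{|OPn|} to get $P(x\alpha=x)=\frac{1}{n}$, and then sum over $x$ to get $E(|F(\alpha)|)=1$. Your positivity check is informal as written ("it grows"), but it is not needed, since $|\mathcal{OP}_n|\geqslant 1$ because $\mathcal{OP}_n$ contains the identity.
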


\begin{proof}
By Proposition \ref{|OPn|} and Theorem \ref{S(n,x)}, the probability function of $x\alpha=x$ is
\[P(x\alpha=x)=\frac{S(n,x)}{|\mathcal{OP}_n|}=\frac{\binom{2n-1}{n-1}-(n-1)}{n\binom{2n-1}{n-1}-n(n-1)}=\frac{1}{n}.\]
Hence the expectation of $|F(\alpha)|$ is
\[E(|F(\alpha)|)=\sum_{x=1}^nP(x\alpha=x)=n\cdot\frac{1}{n}=1.\qedhere\]
\end{proof}

\section{$F(n,m)$ and the distribution in $\mathcal{OP}_n$}\label{Section3}
In this section, we determine that $F(n,m)=\binom{2n}{n-m}$ with $2\leqslant m\leqslant n$ in $\mathcal{OP}_n$. Moreover, we get the probability distribution of $|F(\alpha)|$ for $\alpha\in\mathcal{OP}_n$ and verify $E(|F(\alpha)|)=1$. For each $\alpha\in\mathcal{OP}_n$, let $K(\alpha)=\{k:0\leqslant k\leqslant n-1, \alpha\in\mathcal{O}_n^k\}$.

\begin{lemma}\label{K}
For each $\alpha\in\mathcal{OP}_n$ with $m\geqslant2$ fixed points, the number of $\mathcal{O}_n^k$ containing $\alpha$ is
\[|K(\alpha)|=m.\]
Further, let $C_r=[i_r,j_r]$ be the component associated with one of fixed points of $\alpha$. Then $k=i_r-1$ for $1\leqslant r\leqslant m$.
\end{lemma}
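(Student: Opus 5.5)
The plan is to identify $K(\alpha)$ exactly with the set $\{i_r-1 : 1\leqslant r\leqslant m\}$, with indices read modulo $n$ so that $i_r=1$ gives $k=0$. Since the components are disjoint, distinct components have distinct left endpoints $i_r$, and this identification then gives $|K(\alpha)|=m$.

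First I will describe the components. Since $F(\alpha)\neq\varnothing$, Proposition \ref{interval} shows that the digraph of $\alpha$ is a forest. Each component of a functional digraph contains a unique cycle, and in a forest that cycle must be a loop, i.e. a fixed point. Hence $\alpha$ has exactly $m$ components $C_1,\dots,C_m$, one for each fixed point $f_r$, and they partition $X_n$. By Proposition \ref{leqslant_i-1} each component is a cyclic interval $C_r=[i_r,j_r]$, and $\alpha$ is order-preserving with respect to $\leqslant_{i_r-1}$. Therefore $\alpha\in\mathcal{O}_n^{i_r-1}$, which gives the inclusion $\{i_r-1\}\subseteq K(\alpha)$.

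For the reverse inclusion, suppose $k\in K(\alpha)$, so $\alpha$ is order-preserving with respect to $\leqslant_k$. The key auxiliary fact is that every component of $\alpha$ is an interval for the linear order $\leqslant_k$. To see this, take $N\geqslant n$, so that every point reaches its component's fixed point: $x\alpha^N=f$ for $x$ in the component of $f$. Suppose $x\leqslant_k y\leqslant_k z$ with $x,z\in C_r$. Since $\alpha^N$ is $\leqslant_k$-order-preserving, $f_r=x\alpha^N\leqslant_k y\alpha^N\leqslant_k z\alpha^N=f_r$. Hence $y\alpha^N=f_r$, so $y\in C_r$.

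Now let $C_r$ be the component containing $k+1$, the $\leqslant_k$-minimum. If $i_r\neq k+1$, then the cyclic interval $[i_r,j_r]$ contains both $k+1$ and its cyclic predecessor $k$, which is the $\leqslant_k$-maximum. By the auxiliary fact, $C_r$ is a $\leqslant_k$-interval, and one containing both the minimum and the maximum must be all of $X_n$. That contradicts $m\geqslant2$. So $k+1=i_r$, i.e. $k=i_r-1$.

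The step needing the most care is the bookkeeping for cyclic intervals: checking that $i_r\neq k+1$ really forces $k\in C_r$ (including the wrap-around cases $k=0$ and $i_r=1$). The auxiliary interval fact is short once the iterate $\alpha^N$ is used.
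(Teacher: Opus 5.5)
Your proof is correct and has the same two-inclusion structure as the paper. The inclusion $\{i_r-1\}\subseteq K(\alpha)$ comes from Proposition \ref{leqslant_i-1}, and for the converse the component containing the $\leqslant_k$-minimum $k+1$ must begin at $k+1$, since it is a $\leqslant_k$-interval and is not all of $X_n$. The one difference is how that interval property is obtained: the paper conjugates to $\beta=\delta^k\alpha\delta^{-k}\in\mathcal{O}_n$ and cites Proposition \ref{interval} to conclude that the component of $1$ is $[1,t]$, whereas your $\alpha^N$ argument proves directly that every component is a $\leqslant_k$-interval, which is exactly what that step requires (Proposition \ref{interval} on its own only yields cyclic intervals).
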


\begin{proof}
For each $\alpha\in\mathcal{OP}_n$ with $m\geqslant2$ fixed points, $\alpha$ is order-preserving with respect to $\leqslant_{i_r-1}$ by Proposition \ref{leqslant_i-1}. Recall that $\mathcal{O}_n^{i_r-1}$ is the monoid of all order-preserving transformations on $X_n$ with respect to $\leqslant_{i_r-1}$. Hence $\alpha\in\mathcal{O}_n^{i_r-1}$ for $1\leqslant r\leqslant m$.  Since different components are disjoint, there are $m$ pairwise distinct values of $k=i_r-1$ and $|K(\alpha)|\geqslant m$.

Let $\alpha\in\mathcal{O}_n^k$ and $\beta=\delta^k\alpha\delta^{-k}\in\mathcal{O}_n$ for $0\leqslant k\leqslant n-1$. Since $\mathcal{O}_n$ is the submonoid of $\mathcal{OP}_n$ by Proposition \ref{varphi_k}, each component associated with a fixed point of $\beta$ is an interval by Proposition \ref{interval}. In particular, the component $D$ of $\beta$ containing $1$ must be $[1,t]$ for $1\leqslant t\leqslant n$, where $1$ is the minimum element with respect to $\leqslant$. Moreover, the component $D$ of $\beta$ is corresponding to some component $C_r=[i_r,j_r]$ of $\alpha$, where $i_r=k+1$. That is, each $k$ for which $\alpha\in\mathcal{O}_n^k$ arises from the left endpoint $i_r$ of some component, and is determined by the relation $k=i_r-1$. Hence $|K(\alpha)|\leqslant m$. Therefore, $|K(\alpha)|=m$ and $k=i_r-1$ for $1\leqslant r\leqslant m$.\qedhere
\end{proof}

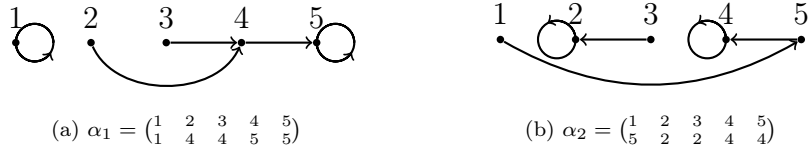
\begin{figure}[H]
\centering
  \begin{subfigure}{0.45\textwidth}
  \centering
    \begin{tikzpicture}[scale=0.5]
      \node[fill, circle, inner sep=1pt, label={above:1}] (p1_1) at (0,0) {};
      \draw (0.5,0) circle (0.5);
      \draw[thick,->] (1.0,0) arc (0:690:0.5);

      \node[fill, circle, inner sep=1pt, label={above:2}] (p2_1) at (2,0) {};

      \node[fill, circle, inner sep=1pt, label={above:3}] (p3_1) at (4,0) {};

      \node[fill, circle, inner sep=1pt, label={above:4}] (p4_1) at (6,0) {};

      \node[fill, circle, inner sep=1pt, label={above:5}] (p5_1) at (8,0) {};
      \draw (8.5,0) circle (0.5);
      \draw[thick,->] (9.0,0) arc (0:690:0.5);

      \draw[thick,->] (p3_1) -- (p4_1);
      \draw[thick,->] (p4_1) -- (p5_1);
      \draw[thick,->] (p2_1) to[out=-66, in=-114] (p4_1);
    \end{tikzpicture}
   \caption{$\alpha_1=\binom{1\quad2\quad3\quad4\quad5}{1\quad4\quad4\quad5\quad5}$}
  \end{subfigure}
  \begin{subfigure}{0.45\textwidth}
  \centering
    \begin{tikzpicture}[scale=0.5]
      \node[fill, circle, inner sep=1pt, label={above:1}] (p1_1) at (0,0) {};

      \node[fill, circle, inner sep=1pt, label={above:2}] (p2_1) at (2,0) {};
      \draw (1.5,0) circle (0.5);
      \draw[thick,->] (2.0,0) arc (0:480:0.5);

      \node[fill, circle, inner sep=1pt, label={above:3}] (p3_1) at (4,0) {};

      \node[fill, circle, inner sep=1pt, label={above:4}] (p4_1) at (6,0) {};
      \draw (5.5,0) circle (0.5);
      \draw[thick,->] (6.0,0) arc (0:480:0.5);

      \node[fill, circle, inner sep=1pt, label={above:5}] (p5_1) at (8,0) {};

      \draw[thick,->] (p3_1) -- (p2_1);
      \draw[thick,->] (p5_1) -- (p4_1);
      \draw[thick,->] (p1_1) to[out=-30, in=-150] (p5_1);
    \end{tikzpicture}
  \caption{$\alpha_2=\binom{1\quad2\quad3\quad4\quad5}{5\quad2\quad2\quad4\quad4}$}
  \end{subfigure}
\caption{Digraphs of $\alpha\in\mathcal{OP}_n$ with $2$ fixed points}
\label{figure_2}
\end{figure}

For example, the left side in Figure \ref{figure_2} is the digraph of $\alpha_1=\binom{1\quad2\quad3\quad4\quad5}{1\quad4\quad4\quad5\quad5}$ with $F(\alpha_1)=\{1,5\}$, which has two components $C_1=\{1\}=[1,1]$ and $C_2=\{2,3,4,5\}=[2,5]$. By Lemma \ref{K}, $\alpha_1\in\mathcal{O}_n$ and $\alpha_1\in\mathcal{O}_n^1$ and so $|K(\alpha_1)|=2$. This is consistent with the fact that $\alpha_1$ is order-preserving with respect to $\leqslant$ and $\leqslant_1$, $\alpha_1(1)\leqslant\alpha_1(2)\leqslant\alpha_1(3)\leqslant\alpha_1(4)\leqslant\alpha_1(5)$ and $\alpha_1(2)\leqslant_1\alpha_1(3)\leqslant_1\alpha_1(4)\leqslant_1\alpha_1(5)\leqslant_1\alpha_1(1)$. The right side in Figure \ref{figure_2} is the digraph of $\alpha_2=\binom{1\quad2\quad3\quad4\quad5}{5\quad2\quad2\quad4\quad4}$ with $F(\alpha_2)=\{2,4\}$, which has two components $C_1=\{1,4,5\}=[4,1]$ and $C_2=\{2,3\}=[2,3]$. By Lemma \ref{K}, $\alpha_2\in\mathcal{O}_n^3$ and $\alpha_2\in\mathcal{O}_n^1$ and so $|K(\alpha_2)|=2$. This is consistent with the fact that $\alpha_2$ is order-preserving with respect to $\leqslant_3$ and $\leqslant_1$, $\alpha_2(4)\leqslant_3\alpha_2(5)\leqslant_3\alpha_2(1)\leqslant_3\alpha_2(2)\leqslant_3\alpha_2(3)$ and $\alpha_2(2)\leqslant_1\alpha_2(3)\leqslant_1\alpha_2(4)\leqslant_1\alpha_2(5)\leqslant_1\alpha_2(1)$.

\begin{lemma}\label{F(n,m)_O_n^k}
The number $F(n,m)$ of elements of $\mathcal{O}_n^k$ with $m$ fixed points is $\frac{m}{n}\binom{2n}{n-m}$.
\end{lemma}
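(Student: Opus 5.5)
The plan is to transfer Theorem \ref{On} to $\mathcal{O}_n^k$ along the isomorphism $\varphi_k$ of Proposition \ref{varphi_k}. The point to check is that $\varphi_k$ preserves the number of fixed points, and Proposition \ref{F(h)} gives exactly this.

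Fix $0\leqslant k\leqslant n-1$. By Proposition \ref{varphi_k}, $\varphi_k:\mathcal{O}_n\to\mathcal{O}_n^k$, $\beta\mapsto\delta^{-k}\beta\delta^k$, is a bijection. For $\gamma=\beta\varphi_k$, Proposition \ref{F(h)} says $x\in F(\gamma)$ if and only if $x-k\in F(\beta)$, with the subtraction read modulo $n$ in $X_n$. The translation $x\mapsto x-k$ is a permutation of $X_n$; it is just the action of $\delta^{-k}$. Hence it restricts to a bijection $F(\gamma)\to F(\beta)$, and $|F(\beta\varphi_k)|=|F(\beta)|$.

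If one prefers not to rely on the modular reading of Proposition \ref{F(h)}, the same conclusion follows directly. We have $y\beta=y$ if and only if $(y\delta^{k})\gamma=y\delta^k$, because $(y\delta^k)\delta^{-k}\beta\delta^k=y\beta\delta^k$. Thus $\delta^k$ maps $F(\beta)$ bijectively onto $F(\gamma)$.

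Consequently $\varphi_k$ restricts, for each $m$, to a bijection
\[\{\beta\in\mathcal{O}_n:|F(\beta)|=m\}\longrightarrow\{\gamma\in\mathcal{O}_n^k:|F(\gamma)|=m\}.\]
Theorem \ref{On} then gives the count $\frac{m}{n}\binom{2n}{n-m}$.

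There is no real obstacle here. The only point needing care is the conjugation convention under right actions: one must make sure it is $\delta^k$, not $\delta^{-k}$, that carries $F(\beta)$ to $F(\gamma)$. Either direction gives a bijection, so the cardinality argument is unaffected.
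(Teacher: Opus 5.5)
Your proof is correct and follows the paper's argument: you transport Theorem \ref{On} along the bijection $\varphi_k$, using Proposition \ref{F(h)} to get $|F(\beta\varphi_k)|=|F(\beta)|$. You also state the bijectivity of $\varphi_k$ explicitly and check directly that $\delta^k$ maps $F(\beta)$ onto $F(\gamma)$, which the paper leaves implicit.
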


\begin{proof}
By Proposition \ref{F(h)}, for $\gamma\in\mathcal{O}_n^k$ with $0\leqslant k\leqslant n-1$ and $\beta\in\mathcal{O}_n$, since $x\in F(\gamma)$ if and only if $x-k\in F(\beta)$, the number of $\gamma$ with $m$ fixed points is equal to the number of $\beta$ with $m$ fixed points. By Theorem \ref{On}, the number $F(n,m)$ of elements of $\mathcal{O}_n$ with $m$ fixed points is $\frac{m}{n}\binom{2n}{n-m}$. Hence the number $F(n,m)$ of elements of $\mathcal{O}_n^k$ with $m$ fixed points is $\frac{m}{n}\binom{2n}{n-m}$.\qedhere
\end{proof}

\begin{theorem}\label{F(n,m)}
The number of elements of $\mathcal{OP}_n$ with $m$ fixed points is
\[F(n,m)=\binom{2n}{n-m}\]
where $2\leqslant m\leqslant n$.
\end{theorem}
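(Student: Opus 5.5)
The plan is a double count of pairs $(\alpha,k)$ with $\alpha\in\mathcal{O}_n^k$, using Theorem \ref{equal}, Lemma \ref{K} and Lemma \ref{F(n,m)_O_n^k}. Fix $m$ with $2\leqslant m\leqslant n$ and let
\[A_m=\{\alpha\in\mathcal{OP}_n:|F(\alpha)|=m\}.\]
Since $m\geqslant 1$, we have $F(\alpha)\neq\varnothing$ for every $\alpha\in A_m$. By Theorem \ref{equal}, every such $\alpha$ therefore lies in at least one $\mathcal{O}_n^k$. Conversely, each $\mathcal{O}_n^k$ is a submonoid of $\mathcal{OP}_n$ by Proposition \ref{varphi_k}. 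Hence
\[A_m=\bigcup_{k=0}^{n-1}\{\gamma\in\mathcal{O}_n^k:|F(\gamma)|=m\}.\]

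Next I count the set of pairs
\[P=\{(\alpha,k):\alpha\in A_m,\ 0\leqslant k\leqslant n-1,\ \alpha\in\mathcal{O}_n^k\}\]
in two ways. Summing over $k$ first, Lemma \ref{F(n,m)_O_n^k} says each $\mathcal{O}_n^k$ contains exactly $\frac{m}{n}\binom{2n}{n-m}$ elements with $m$ fixed points. Since there are $n$ values of $k$, this gives
\[|P|=n\cdot\frac{m}{n}\binom{2n}{n-m}=m\binom{2n}{n-m}.\]
Summing over $\alpha$ first, Lemma \ref{K} says that each $\alpha$ with $m\geqslant 2$ fixed points lies in exactly $|K(\alpha)|=m$ of the monoids $\mathcal{O}_n^k$. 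This gives $|P|=m\,|A_m|$. Comparing the two counts and dividing by $m$ yields $F(n,m)=|A_m|=\binom{2n}{n-m}$.

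The main obstacle is the exactness of the multiplicity $|K(\alpha)|=m$. This is precisely where the hypothesis $m\geqslant 2$ is needed. For $m=1$ the formula breaks down: constant maps, for example, lie in every $\mathcal{O}_n^k$, and more generally a single component $[i,j]$ need not cover $X_n$ in a way that makes the shifts unique. Since Lemma \ref{K} is already established for $m\geqslant 2$, the rest is routine. The only remaining care is in the first count: Lemma \ref{F(n,m)_O_n^k} counts elements of $\mathcal{O}_n^k$ with $m$ fixed points, and these are automatically elements of $A_m$, so no element is missed or counted outside $A_m$.
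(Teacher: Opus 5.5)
Your proof is correct and is essentially the paper's own argument: the paper likewise uses Theorem \ref{equal} to write the set of elements with $m\geqslant 2$ fixed points as $\bigcup_{k}\{\gamma\in\mathcal{O}_n^k:|F(\gamma)|=m\}$. It then divides $\sum_{k=0}^{n-1}|\{\gamma\in\mathcal{O}_n^k:|F(\gamma)|=m\}|=m\binom{2n}{n-m}$ (Lemma \ref{F(n,m)_O_n^k}) by the constant multiplicity $|K(\alpha)|=m$ (Lemma \ref{K}), and your explicit double count of the pairs $(\alpha,k)$ is a cleaner justification of that division. (A small quibble about your closing remark, which does not affect the proof: when $m=1$ the single component is all of $X_n$, and that is exactly why it has no well-defined left endpoint and $k$ is not forced.)
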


\begin{proof}
Since $\bigcup_{k=0}^{n-1}\mathcal{O}_n^k=\{\alpha\in\mathcal{OP}_n:F(\alpha)\neq \varnothing\}$ by Theorem \ref{equal}, the set of $\alpha\in\mathcal{OP}_n$ with $m\geqslant 2$ fixed points is
\[\{\alpha\in\mathcal{OP}_n:|F(\alpha)|=m\}=\bigcup_{k=0}^{n-1}\{\gamma\in\mathcal{O}_n^k:|F(\gamma)|=m\}.\]
By Lemmas \ref{K} and \ref{F(n,m)_O_n^k}, the number of elements of $\mathcal{OP}_n$ with $m\geqslant 2$ fixed points is
\[F(n,m)=\frac{\sum_{k=0}^{n-1}|\{\gamma\in\mathcal{O}_n^k:|F(\gamma)|=m\}|}{|K(\alpha)|}=\frac{n}{m}\cdot\frac{m}{n}\binom{2n}{n-m}=\binom{2n}{n-m}.\qedhere\]
\end{proof}

\begin{theorem}\label{F(n,0) and F(n,1)}
The number of elements of $\mathcal{OP}_n$ with one fixed point and without any fixed point are
\[F(n,1)=\binom{2n}{n-1}-n(n-1),\]
\[F(n,0)=(n+1)\binom{2n-1}{n-1}-2^{2n-1}.\]
\end{theorem}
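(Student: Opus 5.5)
Two counting identities will give both formulas: the expectation result $E(|F(\alpha)|)=1$ together with $|\mathcal{OP}_n|$ from Proposition \ref{|OPn|}, and the already established values $F(n,m)=\binom{2n}{n-m}$ for $m\geqslant 2$ from Theorem \ref{F(n,m)}. Since $E(|F(\alpha)|)=1$, we have
\[\sum_{m=0}^n mF(n,m)=|\mathcal{OP}_n|=n\binom{2n-1}{n-1}-n(n-1),\]
and trivially $\sum_{m=0}^n F(n,m)=|\mathcal{OP}_n|$. The first identity isolates $F(n,1)$, and then the second isolates $F(n,0)$.

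For $F(n,1)$, I will write $F(n,1)=|\mathcal{OP}_n|-\sum_{m=2}^n m\binom{2n}{n-m}$ and evaluate the sum by identity (\ref{4}) as $\frac{n-1}{2}\binom{2n}{n-1}$. I will then use $n\binom{2n-1}{n-1}=\frac{n}{2}\binom{2n}{n}$ and $\binom{2n}{n-1}=\frac{n}{n+1}\binom{2n}{n}$ to rewrite everything in terms of $\binom{2n}{n}$. The expected simplification is
\[\tfrac{n}{2}\binom{2n}{n}-\tfrac{n-1}{2}\cdot\tfrac{n}{n+1}\binom{2n}{n}=\tfrac{n}{n+1}\binom{2n}{n}=\binom{2n}{n-1},\]
which gives $F(n,1)=\binom{2n}{n-1}-n(n-1)$. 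As an alternative check, I could count directly: by Theorem \ref{equal}, $\alpha$ with one fixed point lies in exactly one $\mathcal{O}_n^k$ (the argument of Lemma \ref{K} applies with $m=1$), except for constants, which lie in every $\mathcal{O}_n^k$. Then Lemma \ref{F(n,m)_O_n^k} gives $n\cdot\frac1n\binom{2n}{n-1}$ minus the overcount $n(n-1)$ from the $n$ constant maps, in agreement with the above.

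For $F(n,0)$, I will compute $F(n,0)=|\mathcal{OP}_n|-F(n,1)-\sum_{m=2}^n\binom{2n}{n-m}$ and use identity (\ref{3}) for the last sum. The $n(n-1)$ terms cancel and the two $\binom{2n}{n-1}$ terms cancel, leaving
\[n\binom{2n-1}{n-1}-2^{2n-1}+\tfrac12\binom{2n}{n}=(n+1)\binom{2n-1}{n-1}-2^{2n-1},\]
using $\frac12\binom{2n}{n}=\binom{2n-1}{n-1}$.

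I do not expect a real obstacle here; the only care needed is in the binomial simplifications, and in the direct-count alternative, in the treatment of constant maps, since each has exactly one fixed point but belongs to all $n$ of the $\mathcal{O}_n^k$.
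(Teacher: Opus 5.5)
Your proof is correct. For $F(n,0)$ it coincides with the paper's: subtract $F(n,1)$ and $\sum_{m\geqslant 2}\binom{2n}{n-m}$ from $|\mathcal{OP}_n|$, then apply (\ref{3}). For $F(n,1)$ you take a different route.

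The paper tries to count directly. It reuses the factorization bijection $(r,\beta)\mapsto\delta^r\beta$ of Theorem~\ref{factorization}, and subtracts $n(n-1)$ because each of the $n$ constant maps has one fixed point and $n$ factorizations. The main term $\binom{2n}{n-1}$, however, is simply asserted there; its natural justification is exactly your ``alternative check''.

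You instead feed the double count $\sum_m mF(n,m)=\sum_x S(n,x)=|\mathcal{OP}_n|$ from Section~\ref{Section2} into identity (\ref{4}), so $F(n,1)$ falls out by algebra. Your simplification through $\binom{2n}{n-1}=\frac{n}{n+1}\binom{2n}{n}$ checks out. This avoids any structural claim about which $\mathcal{O}_n^k$ contain a map with one fixed point. The cost is that the paper's closing corollary, which recomputes $E(|F(\alpha)|)=1$ from the distribution, becomes circular rather than an independent check.

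One caution about the alternative check: the argument of Lemma~\ref{K} does not transfer to $m=1$ as you claim. With one fixed point the only component is $X_n=[i,i-1]$ for every $i$. So ``$k+1$ is a left endpoint of a component'' imposes nothing and yields only $|K(\alpha)|\leqslant n$.

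The statement you need, that a non-constant $\alpha$ with one fixed point lies in exactly one $\mathcal{O}_n^k$, is true but requires its own argument. After conjugating, it suffices to show that a non-constant $\beta\in\mathcal{O}_n$ with unique fixed point $p$ is not order-preserving for $\leqslant_j$ when $1\leqslant j\leqslant n-1$. This follows from three facts:
\begin{itemize}
\item $x<x\beta\leqslant p$ for $x<p$;
\item $p\leqslant x\beta<x$ for $x>p$;
\item a two-case check on whether $p\leqslant j$. In each case, both blocks of images are forced into one block of $\leqslant_j$, so $n\beta\leqslant 1\beta$, and $\beta$ would be constant.
\end{itemize}
Since this is only a side check, your main proof stands.
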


\begin{proof}
Assume that $\alpha\in\mathcal{OP}_n$ has only one fixed point $x$, where $1\leqslant x\leqslant n$. Consider the mapping
\[\varphi:\{(r,\beta):0\leqslant r\leqslant n-1,\beta\in O_n,x\delta^r\beta=x\} \longrightarrow \{\alpha\in\mathcal{OP}_n: x\alpha=x\}\]
by $(r,\beta)\varphi=\alpha$. By Proposition \ref{factorization}, $\varphi$ is a bijection when $\alpha$ is non-constant. If $\alpha$ is constant, then $r=0,1,\ldots,n-1$ and $\beta$ is constant with $x$. Since constant transformation must be the transformation with one fixed point, the number of elements of $\mathcal{OP}_n$ with one fixed point is
\[F(n,1)=\binom{2n}{n-1}-n(n-1).\]
By Proposition \ref{|OPn|}, the number of elements of $\mathcal{OP}_n$ without any fixed point is
\begin{align*}
F(n,0)
&\;=\;\;|\mathcal{OP}_n|-F(n,1)-\sum_{m=2}^nF(n,m)\\
&\;=\;\;n\binom{2n-1}{n-1}-n(n-1)-(\binom{2n}{n-1}-n(n-1))-\sum_{m=2}^n\binom{2n}{n-m}\\
&\stackrel{(\ref{3})}{=}n\binom{2n-1}{n-1}-\binom{2n}{n-1}-(2^{2n-1}-\frac{1}{2}\binom{2n}{n}-\binom{2n}{n-1})\\
&\;=\;\;(n+1)\binom{2n-1}{n-1}-2^{2n-1}.\qedhere
\end{align*}
\end{proof}

Some computed values of $F(n,m)$ for $0\leqslant m \leqslant n \leqslant 6$ are shown in Table \ref{table F(n,m)}.

\begin{table}[H]
\centering
\begin{threeparttable}
\setlength{\tabcolsep}{8pt}
\renewcommand{\arraystretch}{1.20}
\begin{tabular}{lcccccccc}
\toprule
$n\backslash m$ & 0 & 1 & 2 & 3 & 4 & 5 & 6 & $\sum_mF(n,m)=|\mathcal{OP}_n|$ \\
\midrule
1 &      & 1   &     &     &    &    &   & 1 \\
2 & 1    & 2   & 1   &     &    &    &   & 4 \\
3 & 8    & 9   & 6   & 1   &    &    &   & 24 \\
4 & 47   & 44  & 28  & 8   & 1  &    &   & 128 \\
5 & 244  & 190 & 120 & 45  & 10 & 1  &   & 610 \\
6 & 1186 & 762 & 495 & 220 & 66 & 12 & 1 & 2742 \\
\bottomrule
\end{tabular}
\caption{$F(n,m)$ for $0\leqslant m \leqslant n \leqslant 6$}
\label{table F(n,m)}
\end{threeparttable}
\end{table}

\begin{corollary}\label{recurrence relation}
The number of elements of $\mathcal{OP}_n$ with $m$ fixed points has the recurrence relation
\[F(n,m+2)=F(n+1,m+1)-2F(n,m+1)-F(n,m)\]
where $2\leqslant m\leqslant n$.
\end{corollary}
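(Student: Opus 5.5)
The plan is to derive the recurrence directly from the closed form of Theorem \ref{F(n,m)}. Every index in the identity is at least $2$, so that formula applies to each term. Indeed, for $2\leqslant m\leqslant n$ the values $m$, $m+1$ and $m+2$ are all $\geqslant 2$, and $F(n+1,m+1)$ has $2\leqslant m+1\leqslant n+1$. So the cases $F(n,0)$ and $F(n,1)$ of Theorem \ref{F(n,0) and F(n,1)}, which have a different form, are never needed.

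The one point that needs care is when $m+1$ or $m+2$ exceeds $n$. There $F(n,m+1)$ or $F(n,m+2)$ counts elements of $\mathcal{OP}_n$ with more than $n$ fixed points, so it is $0$. This agrees with $\binom{2n}{n-m-1}$ or $\binom{2n}{n-m-2}$, since a binomial coefficient with negative lower index vanishes. Hence $F(n,j)=\binom{2n}{n-j}$ holds for all $j\geqslant 2$. I would state this extension of Theorem \ref{F(n,m)} explicitly first, so the remaining computation is uniform in $m$.

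Rearranged, the recurrence asks for
\[F(n+1,m+1)=F(n,m)+2F(n,m+1)+F(n,m+2),\]
that is,
\[\binom{2n+2}{n-m}=\binom{2n}{n-m}+2\binom{2n}{n-m-1}+\binom{2n}{n-m-2}.\]
This is Vandermonde's convolution $\binom{2n+2}{k}=\sum_{t=0}^{2}\binom{2}{t}\binom{2n}{k-t}$ with $k=n-m$. Alternatively, apply Pascal's rule twice: $\binom{2n+2}{k}=\binom{2n+1}{k}+\binom{2n+1}{k-1}$, then expand each term once more. Both routes hold with the convention that coefficients with negative lower index are zero, so they cover the boundary cases $m=n-1$ and $m=n$. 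Substituting back and moving terms across gives the stated recurrence.

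No real obstacle is expected. The only thing to handle carefully is the justification that $F(n,j)=0=\binom{2n}{n-j}$ for $j>n$, needed at the top of the range of $m$. As a sanity check, Table \ref{table F(n,m)} with $n=4$, $m=2$ gives $F(4,4)=1$ and $F(5,3)-2F(4,3)-F(4,2)=45-16-28=1$, which agree.
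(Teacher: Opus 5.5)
Your proposal is correct and follows the paper's route: the paper's proof is the one line that the recurrence is an immediate consequence of $F(n,m)=\binom{2n}{n-m}$. Your version spells out the identity $\binom{2n+2}{n-m}=\binom{2n}{n-m}+2\binom{2n}{n-m-1}+\binom{2n}{n-m-2}$ via Vandermonde. It also justifies the out-of-range terms $F(n,n+1)=F(n,n+2)=0$ that appear when $m\in\{n-1,n\}$, which the paper leaves implicit.
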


\begin{proof}
This is an immediate consequence of the fact that $F(n,m)=\binom{2n}{n-m}$.\qedhere
\end{proof}

\begin{corollary}
For each $\alpha\in\mathcal{OP}_n$, the probability function of $|F(\alpha)|$ is
\[P(|F(\alpha)|=m)=
\begin{cases}
\frac{(n+1)\binom{2n-1}{n-1}-2^{2n-1}}{n\binom{2n-1}{n-1}-n(n-1)},&\text{if}\;\; m = 0,\\
\frac{\binom{2n}{n-1}-n(n-1)}{n\binom{2n-1}{n-1}-n(n-1)},&\text{if}\;\; m = 1,\\
\frac{\binom{2n}{n-m}}{n\binom{2n-1}{n-1}-n(n-1)},&\text{if}\;\; 2 \leqslant m \leqslant n.
\end{cases}\]
\end{corollary}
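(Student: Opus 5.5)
This corollary follows by dividing the counts $F(n,m)$ obtained earlier by $|\mathcal{OP}_n|$. The probability space is $\mathcal{OP}_n$ with the uniform distribution, as in the theorem on $P(x\alpha=x)$. So for each $m$ we set
\[P(|F(\alpha)|=m)=\frac{F(n,m)}{|\mathcal{OP}_n|},\]
with denominator $|\mathcal{OP}_n|=n\binom{2n-1}{n-1}-n(n-1)$ from Proposition \ref{|OPn|}. The case $m=0$ uses the value $F(n,0)=(n+1)\binom{2n-1}{n-1}-2^{2n-1}$ from Theorem \ref{F(n,0) and F(n,1)}. The case $m=1$ uses $F(n,1)=\binom{2n}{n-1}-n(n-1)$, also from that theorem. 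For $2\leqslant m\leqslant n$ we substitute $F(n,m)=\binom{2n}{n-m}$ from Theorem \ref{F(n,m)}. Together these give the three cases of the stated piecewise formula.

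Since $F(n,0)$ was defined in Theorem \ref{F(n,0) and F(n,1)} as the complement $|\mathcal{OP}_n|-F(n,1)-\sum_{m\geqslant 2}F(n,m)$, the probabilities automatically sum to $1$. For the promised verification that $E(|F(\alpha)|)=1$, we compute
\[\sum_m mF(n,m)=F(n,1)+\sum_{m=2}^n m\binom{2n}{n-m}.\]
By identity (\ref{4}) this equals
\[\binom{2n}{n-1}-n(n-1)+\frac{n-1}{2}\binom{2n}{n-1}=\frac{n+1}{2}\binom{2n}{n-1}-n(n-1).\]
Next we use $\binom{2n}{n-1}=\frac{2n}{n+1}\binom{2n-1}{n-1}$, which gives $\frac{n+1}{2}\binom{2n}{n-1}=n\binom{2n-1}{n-1}$. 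Hence $\sum_m mF(n,m)=n\binom{2n-1}{n-1}-n(n-1)=|\mathcal{OP}_n|$, and the expectation is $1$, in agreement with the earlier theorem.

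The only place needing care is the edge case $n=1$, where the $m=0$ and $m\geqslant 2$ ranges are empty or degenerate. The formula still gives $P(|F(\alpha)|=1)=1$, because the $m=0$ numerator is $2\cdot 1-2=0$. The one identity worth checking is the binomial identity $\binom{2n}{n-1}=\frac{2n}{n+1}\binom{2n-1}{n-1}$; everything else is direct substitution.
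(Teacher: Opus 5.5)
Your proposal is correct and follows the paper's own proof: divide the counts $F(n,m)$ from Theorems \ref{F(n,m)} and \ref{F(n,0) and F(n,1)} by $|\mathcal{OP}_n|$ from Proposition \ref{|OPn|}, then confirm $E(|F(\alpha)|)=1$ using identity (\ref{4}). Your additions are minor: you make explicit the step $\frac{n+1}{2}\binom{2n}{n-1}=n\binom{2n-1}{n-1}$, which the paper's final equality uses without comment, and you add the harmless $n=1$ check (note that $F(n,0)$ was \emph{computed} as the complement, not defined as it).
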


\begin{proof}
By Proposition \ref{|OPn|}, Theorems \ref{F(n,m)} and  \ref{F(n,0) and F(n,1)}, the probability function of $|F(\alpha)|$ is
\[P(|F(\alpha)|=m)=\frac{F(n,m)}{|\mathcal{OP}_n|}=
\begin{cases}
\frac{(n+1)\binom{2n-1}{n-1}-2^{2n-1}}{n\binom{2n-1}{n-1}-n(n-1)},&\text{if}\;\; m = 0,\\
\frac{\binom{2n}{n-1}-n(n-1)}{n\binom{2n-1}{n-1}-n(n-1)},&\text{if}\;\; m = 1,\\
\frac{\binom{2n}{n-m}}{n\binom{2n-1}{n-1}-n(n-1)},&\text{if}\;\; 2 \leqslant m \leqslant n.
\end{cases}\]
Hence the expectation of $|F(\alpha)|$ is
\begin{align*}
E(|F(\alpha)|)
&\;=\;\;\sum_{m=0}^n m\cdot P(|F(\alpha)|=m)\\
&\;=\;\;\frac{\binom{2n}{n-1}-n(n-1)}{n\binom{2n-1}{n-1}-n(n-1)}+\sum_{m=2}^n\frac{m\binom{2n}{n-m}}{n\binom{2n-1}{n-1}-n(n-1)}\\
&\stackrel{(\ref{4})}{=}\frac{\binom{2n}{n-1}-n(n-1)+\frac{n-1}{2}\binom{2n}{n-1}}{n\binom{2n-1}{n-1}-n(n-1)}\\
&\;=\;\;\frac{\frac{n+1}{2}\binom{2n}{n-1}-n(n-1)}{n\binom{2n-1}{n-1}-n(n-1)}=1.\qedhere
\end{align*}
\end{proof}

\section*{Acknowledgements}
This research is partially supported by the National Natural Science Foundation of China (Nos. 12271224, 12571018, 12401027), the Fundamental Research Funds for the Central University (No. lzujbky-2023-ey06) and Gansu Provincial Department of Education: Innovation Star Project for Graduate Students in Universities in Gansu Province (No. 2026CXZX-151).

\end{document}